\newcommand{\aut}[1][n]{B_{#1}}
\newcommand{\syl}[1][n]{G_{#1}}
\newcommand{\m}{^{-1}}
\newcommand{\rest}[1][n]{_{(#1)}}
\newtheorem{theorem}{Theorem}
\newtheorem{proposition}[theorem]{Proposition}
\newtheorem{corollary}[theorem]{Corollary}
\newtheorem{lemma}[theorem]{Lemma}
\newtheorem{remark}{Remark}
\theoremstyle{definition}
\newtheorem{definition}{Definition}
\begin{document}
\title{The commutator and centralizer description of Sylow subgroups of alternating and symmetric groups}
\author{Ruslan Skuratovskii} 
\maketitle



  \begin{abstract}

  Given a permutational wreath product sequence of cyclic groups of order we research a  commutator width of such groups and some properties of its commutator subgroup.
 Commutator width of Sylow 2-subgroups of alternating group ${A_{{2^{k}}}}$, permutation group ${S_{{2^{k}}}}$ and $C_p \wr B$ were founded. The result of research was extended on subgroups $(Syl_2 {A_{{2^{k}}}})'$, $p>2$.
The paper presents a construction of commutator subgroup of Sylow 2-subgroups of symmetric and alternating groups. Also minimal generic sets of Sylow 2-subgroups of $A_{2^k}$ were founded.
Elements presentation of $(Syl_2 {A_{{2^{k}}}})'$, $(Syl_2 {S_{{2^{k}}}})'$ was investigated.
We prove that the commutator width \cite {Mur} of an arbitrary element of a discrete wreath product of cyclic groups $C_{p_i}, \, p_i\in \mathbb{N} $ is 1.

Key words: wreath product of group; commutator width of Sylow $p$-subgroups; commutator subgroup of alternating group, centralizer subgroup, semidirect product.
  \end{abstract}


  \begin{section}{Introduction}

A form of commutators of wreath product $A\wr B$ was briefly considered in \cite{Meld}. For more deep description of this form we take into account the commutator width $(cw(G))$ which was presented in work of Muranov  \cite {Mur}. This form of commutators of wreath product was used by us for the research of  $cw(Syl_2 {A_{{2^{k}}}})$, $cw(Syl_2 {S_{{2^{k}}}})$ and $cw (C_p \wr B)$.  As well known, the first example of a group $G$ with $cw(G) > 1$ was given by
Fite \cite{Fite}. We deduce an estimation for commutator width of wreath product $B \wr C_p$ 
 of groups $C_p$ and an arbitrary group $B$ taking into the consideration a $cw(B)$ of passive group $B$. In this paper we continue a researches which was stared in \cite{SkAr}. The form of commutator presentation \cite{Meld} was presented by us in form of wreath recursion and commutator width of it was studied.
A research of commutator-group
 serves to decision  of inclusion problem \cite{Lin} for elements of $Syl_2 {A_{{2^{k}}}}$ in its derived subgroup $(Syl_2 {A_{{2^{k}}}})'$.
  \end{section}

  \begin{section}{Preliminaries }
Denote by $fun(B,A)$ the direct product of isomorphic copies of A indexed by elements of $B$.
Thus, $fun(B,A)$ is a function $B \rightarrow  A$ with the conventional multiplication and finite supports.
The extension of $fun(B, A)$ by $B$ is called the discrete wreath product of
$A,  B$.
Thus, $A \wr  B: = fun(B, A) \leftthreetimes B$ moreover, $bfb^{-1} = f^b$, $b \in B$, $f \in fun(B, A)$.
As well known that a wreath product of permutation groups is associative construction.

Let $G$ be a group acting (from the left) by permutations on
a set $X$ and let $H$ be an arbitrary group.
Then the (permutational) wreath product
$H \wr G$ is the semidirect product $H^X \leftthreetimes G $, 
 where $G$ acts on the direct power $H^X$ by
the respective permutations of the direct factors.
The group $C_p$ is equipped with a natural action by left shift on $X =\{1,…,p\}$, $p\in \mathbb{N}$.

The multiplication rule of automorphisms $g$, $h$ which presented in form of wreath recursion \cite{Ne}
$g=(g\rest[1],g\rest[2],\ldots,g\rest[d])\sigma_g, \
h=(h\rest[1],h\rest[2],\ldots,h\rest[d])\sigma_h,$ is given by the formula:
$$g\cdot h=(g\rest[1]h\rest[\sigma_g(1)],g\rest[2]h\rest[\sigma_g(2)],\ldots,g\rest[d]h\rest[\sigma_g(d)])\sigma_g \sigma_h.$$

We define $\sigma$ as $(1,2,\ldots, p)$ where $p$ is defined by context.

We consider $B \wr (C_p,\, X)$, where $X=\{1,..,p\}$, and $B'=\{[f,g]\mid f,g\in B\}$, $p\geq 1$.
If we fix some indexing $\{x_1, x_2, ... , x_m \}$ of set the $X$, then an element $h\in H^X$ can be written as  $(h_1, ..., h_m ) $ for $h_i \in H$.

 The set $X^*$ is naturally a vertex set of a regular rooted tree, i.e. a connected graph without cycles
and a designated vertex $v_0$ called the root, in which two words are connected by an edge if and only if they are of form $v$ and $vx$, where $v\in X^*$, $x\in X$.
The set $X^n \subset X^*$ is called the $n$-th level of the tree $X^*$
and $X^0 = \{v_0\}$. We denote by $v_{j,i}$ the vertex of $X^j$, which has the number $i$.
Note that the unique vertex $v_{k,i}$ corresponds to the unique word $v$ in alphabet $X$.
For every automorphism $g\in Aut{{X}^{*}}$ and every word $v \in X^{*}$  define the section (state) $g_{(v)} \in AutX^{*}$ of $g$ at $v$ by the rule: $g_{(v)}(x) = y$ for $x, y \in X^*$  if and only if $g(vx) = g(v)y$.
The subtree of $X^{*}$ induced by the set of vertices $\cup_{i=0}^k X^i$ is denoted by $X^{[k]}$.
 The restriction of the action of an automorphism $g\in AutX^*$ to the subtree $X^{[l]}$ is denoted by $g_{(v)}|_{X^{[l]}}$.
 A restriction $g_{(v)}|_{X^{[1]}} $ is called the vertex permutation (v.p.) of $g$ in a vertex $v$.
We call the endomorphism $\alpha|_{v} $ restriction of $g$ in a vertex $v$ \cite{Ne}. For example, if $|X| = 2$ then we just have to distinguish active vertices, i.e., the
vertices for which $\alpha|_{v} $ is non-trivial.
 As well known if $X=\{0, 1 \} $ then $AutX^{[k-1]} \simeq \underbrace {C_2 \wr ...\wr C_2}_{k-1}$ \cite{Ne}.

Let us label every vertex of ${{X}^{l}},\,\,\,0\le l<k$ by sign 0 or 1 in relation to state of v.p. in it. Let us denote state value of $\alpha $ in $v_{ki}$ as ${{s}_{ki}}(\alpha )$ we put that ${{s}_{ki}}(\alpha )=1$ if $\alpha|_{v_{ki}}  $  is non-trivial, 
 and $s_{ki}(\alpha )=0$ if $\alpha|_{v_{ki}} $ is trivial.
Obtained by such way a vertex-labeled regular tree is an element of $Aut{{X}^{[k]}}$.
All undeclared terms are from \cite{Sam, Gr}.

Let us make some notations.
 The commutator of two group elements $a$ and $b$, denoted
\[
[a,b] = aba\m b\m,
\]
 conjugation by an element $b$ as
\[
a^b = bab\m,
\]
$\sigma = (1,2, \ldots, p)$. Also $G_k \simeq Syl_2 A_{2^k}$, $B_k = \wr_{i=1}^k C_2 $. The structure of $G_k$ was investigated in \cite{SkAr}. For this research we can regard $G_k$ and $B_k$ as recursively constructed i.e.
$B_1 = C_2$,
$B_k = B_{k-1} \wr C_2$   for $k>1$,
$G_1 = \langle e \rangle$,
$G_k = \{(g_1, g_2)\pi \in B_{k} \mid g_1g_2 \in G_{k-1} \}$ for $k>1$.


The commutator length of an element $g$ of the derived
subgroup of a group $G$, denoted \emph{clG(g)}, is the minimal $n$ such that there
exist elements $x_1, . . . , x_n, y_1, . . . , y_n$ in G such that $g = [x_1, y_1] . . . [x_n, y_n]$.
The commutator length of the identity element is 0. The commutator width
of a group $G$, denoted $cw(G)$, is the maximum of the commutator lengths
of the elements of its derived subgroup $[G,G]$.
  \end{section}

  \begin{section}{Main result}
We are going to prove that
the set of all commutators $K$ of Sylow 2-subgroup $Syl_{2} A_{{2^k}}$ of the alternating  group ${A}_{2^k}$ is the commutant of $Syl_2 {A_{{2^{k}}}}$.



The following Lemma follows from the corollary 4.9 of the Meldrum's book \cite{Meld}.
\begin{lemma} \label{form of comm} An element of form
$(r_1, \ldots, r_{p-1}, r_p) \in W'= (B \wr C_p)'$ iff product of all $r_i$ (in any order) belongs to $B'$, where $B$ is an arbitrary group.
\end{lemma}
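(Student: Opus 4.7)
The plan is to prove both directions by using a well-chosen homomorphism from $W = B \wr C_p$ to an abelian group, which takes care of one inclusion, and then to explicitly realize the other inclusion by constructing commutators in $W$ that produce every base-layer tuple whose product lies in $B'$.

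For the easy direction, I first introduce the abelianization map $\mu\colon W \to (B/B') \times C_p$ defined by $(b_1,\dots,b_p)\sigma_g \mapsto (b_1 b_2 \cdots b_p \bmod B',\, \sigma_g)$. Using the multiplication rule of wreath recursion recalled in the preliminaries, I would verify that $\mu$ is indeed a homomorphism: the first coordinate of the product is $\prod_i g_i h_{\sigma_g(i)}$, which reduces modulo $B'$ to $\prod_i g_i \cdot \prod_i h_i$ since reordering and cross-commuting among the $g_i$ and $h_i$ only changes the value by elements of $B'$. Since the target is abelian, $W' \subseteq \ker \mu$. Restricted to $B^p$ this gives exactly that any $(r_1,\dots,r_p) \in W'$ with trivial top must satisfy $\prod r_i \in B'$, and the value is independent of the order of multiplication.

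For the converse, I would split the argument into two constructive steps. First, using $h = \sigma$ and $g = (b, 1, \dots, 1) \in B^p$, a direct computation shows $[h,g] = (b^{-1}, 1, \dots, 1, b)$, and by the same idea one obtains any element of the form $(1,\dots,1,b,1,\dots,1,b^{-1})$ with $b$ in position $i$ and $b^{-1}$ in position $p$ as a commutator in $W$. Call these $c_i(b)$. Multiplying the $c_i(r_i)$ for $i=1,\dots,p-1$ in order yields
\[
\prod_{i=1}^{p-1} c_i(r_i) = (r_1, r_2, \dots, r_{p-1},\, r_1^{-1} r_2^{-1} \cdots r_{p-1}^{-1}),
\]
so to obtain the target tuple $(r_1,\dots,r_p)$ it remains to multiply by $(1,\dots,1,x)$ with $x = r_{p-1}\cdots r_1 r_p$. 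By the hypothesis, $x$ is a reordering of $r_1 \cdots r_p$ modulo $B'$, hence $x \in B'$.

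The second step is to show that $(1,\dots,1,x) \in W'$ whenever $x \in B'$. Writing $x = \prod_j [a_j, b_j]$ in $B$, I set $A_j = (1,\dots,1,a_j)$ and $B_j = (1,\dots,1,b_j)$ in $B^p \subset W$; then $[A_j, B_j] = (1,\dots,1,[a_j,b_j])$ since the base coordinates multiply componentwise with trivial top, so the product of these commutators in $W$ equals $(1,\dots,1,x)$. Combining this with the first step expresses $(r_1,\dots,r_p)$ as a product of commutators in $W$, completing the reverse inclusion.

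The only delicate point is the well-definedness of the statement "product of all $r_i$ in any order"; this requires noting at the outset that the residue class $r_1 \cdots r_p \bmod B'$ is independent of the order of multiplication, which is immediate because $B/B'$ is abelian. The other potential obstacle is administrative: keeping the order of multiplication of the $c_i(r_i)$ consistent so that the leftover factor genuinely has the form $(1,\dots,1,x)$ with $x \in B'$, which is just a bookkeeping check rather than a conceptual difficulty.
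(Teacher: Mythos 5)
Your proof is correct, and in the necessity direction it takes a genuinely different route from the paper's. The paper follows Meldrum's Corollary 4.9: it writes the coordinates of a single commutator explicitly as $r_i = h_i g_{a(i)} h_{ab(i)}^{-1} g_{aba^{-1}(i)}^{-1}$, argues that $\prod_{i=1}^{p} r_i$ rearranges modulo $B'$ into a product of commutators of $B$, and then needs a separate closure computation ($R_1 R_2 \cdots R_k \in B'$) to handle products of several commutators; your abelianization map $\mu\colon W \to (B/B')\times C_p$ replaces all of this at a stroke, since once $\mu$ is verified to be a homomorphism into an abelian group, $W'\subseteq\ker\mu$ covers arbitrary elements of $W'$, with closure under multiplication and order-independence modulo $B'$ coming for free. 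In the sufficiency direction you use essentially the same building blocks as the paper --- the elements $(r_1,e,\dots,e,r_1^{-1})$, $(e,\dots,e,r_i,e,\dots,e,r_i^{-1})$ and a final factor $(e,\dots,e,x)$ with $x\in B'$ --- but where the paper merely lists them and asserts they generate the required elements, you verify the point the paper leaves implicit: that these blocks themselves lie in $W'$, via the commutators $[\sigma^{j},g]$ (your computation $[h,g]=(b^{-1},1,\dots,1,b)$ is correct under the paper's multiplication convention) and $[A_j,B_j]=(1,\dots,1,[a_j,b_j])$, with $x\in B'$ supplied by the hypothesis applied to the order $r_{p-1}\cdots r_1 r_p$. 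What the paper's route buys is the explicit coordinate form of commutators, which it reuses in the subsequent lemmas where $x=\prod_{j=1}^{k}[f_j,g_j]$ with $k\le cw(B)$ feeds the commutator-width estimates; what your route buys is a shorter, self-contained and more rigorous proof of the equivalence itself.
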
 

\begin{proof}

Analogously to the Corollary 4.9 of the Meldrum's book \cite{Meld} we can deduce new presentation of commutators in form of wreath recursion
\begin{eqnarray*}
w=(r_1, r_2, \ldots, r_{p-1},  r_p),
\end{eqnarray*}
where $r_i\in B$.
If we multiply elements from a tuple $(r_1, \ldots, r_{p-1}, r_p)$, where $r_i={{h}_{i}}{{g}_{a(i)}}h_{ab(i)}^{-1}g_{ab{{a}^{-1}}(i)}^{-1}$, $h, \, g \in B$ and $a,b \in C_p$, then we get a product
\begin{equation} \label{Meld} 
 x=\stackrel{p}{ \underset{\text{\it i=1}} \prod} r_i = \prod\limits_{i=1}^{p}{{{h}_{i}}{{g}_{a(i)}}h_{ab(i)}^{-1}g_{ab{{a}^{-1}}(i)}^{-1} \in B'},
\end{equation}
where $x $ is a product of corespondent commutators.
Therefore we can write $r_p = r_{p-1}\m \ldots r_1\m x$. We can rewrite element $x\in B'$ as the product $x = \prod \limits ^{cw(B)}_{j=1} [f_j,g_j]$.

Note that we impose more weak condition on product of all $r_i$ to belongs to $B'$ then in Definition 4.5. of form $P(L)$ in \cite{Meld}.

 In more detail deducing of our representation constructing can be reported in following way.
 If we multiply elements having form of a tuple $(r_1, \ldots, r_{p-1}, r_p)$, where $r_i={{h}_{i}}{{g}_{a(i)}}h_{ab(i)}^{-1}g_{ab{{a}^{-1}}(i)}^{-1}$, $h, \, g \in B$ and $a,b \in C_p$, then in case $cw(B)=0$ we obtain a product
\begin{equation}\label{Meld}
 \stackrel{p}{ \underset{\text{\it i=1}} \prod} r_i = \prod\limits_{i=1}^{p}{{{h}_{i}}{{g}_{a(i)}}h_{ab(i)}^{-1}g_{ab{{a}^{-1}}(i)}^{-1} \in B'}.
\end{equation}

Note that if we rearange elements in (1) as $h_{1} h_{1}^{-1} g_{1}g_2^{-1}h_{2} h_{2}^{-1} g_{1}g_2^{-1} ...  h_{p} h_{p}^{-1} g_{p}g_p^{-1}$ then by reason of such permutations we obtain a product of corespondent commutators. 
Therefore, following equality holds true

\begin{equation}\label{HH}
\prod\limits_{i=1}^{p}{{{h}_{i}}{{g}_{a(i)}}h_{ab(i)}^{-1}g_{ab{{a}^{-1}}(i)}^{-1} } =\prod\limits_{i=1}^{p}h_{i} h_{i}^{-1} g_{i}g_i^{-1}x \in B',
\end{equation}
where $x $ is a product of corespondent commutators.
Therefore,
\begin{eqnarray} \label{form}
(r_1, \ldots, r_{p-1}, r_p) \in W' \mbox{ iff } r_{p-1} \cdot \ldots \cdot r_{1} \cdot r_p = x\in B'
\end{eqnarray}
 Thus, one of elements from coordinate of wreath recursion $(r_1, \ldots, r_{p-1}, r_p) $ depends on rest of $r_i$. This dependence contribute that the product $\prod\limits_{j=1}^{p}r_{j}$ for arbitrary sequence $\{ r_{j} \}_{j=1} ^{p}$
 belongs to  $B'$. Thus, $r_p$ can be expressed as:
\begin{eqnarray*}
r_p = r_{1}\m \cdot \ldots \cdot r_{p-1}\m x.
\end{eqnarray*}

Denote a $j$-th tuple, which consists of elements of a wreath recursion, by $(r_{{j}_1},r_{{j}_2},..., r_{{j}_p} )$.
Closedness by multiplication of the set of forms $(r_1, \ldots, r_{p-1}, r_p) \in W= (B \wr C_p)'$
  follows from


\begin{eqnarray} \label{prod}
  \prod\limits_{j=1}^{k}  ( r_{j1} \ldots r_{j{p-1}} r_{jp})= \prod\limits_{j=1}^{k} \prod\limits_{i=1}^{p}  r_{j_i} =  R_1 R_2 ...  R_{k} \in B ',
\end{eqnarray}

  where $r_{ji}$ is $i$-th element from tuple number $j$,  $R_j = \prod\limits_{i=1}^{p}  r_{ji}, \,\, \, 1 \leq j \leq  k$. As it was shown above $R_j = \prod\limits_{i=1}^{p-1}  r_{ji} \in B'$. Therefore, the product (\ref{prod}) of $R_j$, $j \in \{1,...,k \}$ which is similar to the product mentioned in \cite{Meld}, has the property $R_1 R_2 ...  R_{k} \in B '$ too, because of $B '$ is subgroup.
   Thus, we get a product of form (\ref{Meld}) and the similar reasoning as above are applicable.

Let us prove the sufficiency condition. 
If the set $K$ of elements that satisfy the condition of this theorem that all products of all $r_i$, where every $i$ occurs in this forms once, belong to $B'$, then using the elements of form

 $(r_{1},e,..., e, r_{1}^{-1} )$, ... , $(e,e,...,e, r_{i}, e, r_{i}^{-1} )$, ... ,$(e,e,..., e, r_{p-1}, r_{p-1}^{-1})$, $(e,e,..., e, r_1 r_2 \cdot ...\cdot r_{p-1} )$

  we can express any element of form $(r_1, \ldots, r_{p-1}, r_p) \in W= (C_p \wr B)'$. We need to prove that in such way we can express all element from $W$ and only elements of $W$. The fact that all elements can be generated by elements of $K$ follows from randomness of choice every $r_i$, $i<p$ and the fact that equality (1) holds so construction of $r_p$ is determined.
\end{proof}

\begin{lemma} \label{form of comm_2} For any group $B$ and integer $p\geq 2, \, p \in \mathbb{N} $ if $w\in (B \wr C_p)'$ then $w$ can be represented as the following wreath recursion
\begin{align*}
w=(r_1, r_2, \ldots, r_{p-1},  r_1\m \ldots r_{p-1}\m \prod \limits ^{k}_{j=1} [f_j,g_j]),
\end{align*}
where $r_1, \ldots, r_{p-1}, f_j, g_j \in B$, and $k\leq cw(B)$.
\end{lemma}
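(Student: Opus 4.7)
The plan is to derive this lemma as an immediate consequence of Lemma \ref{form of comm} together with the definition of commutator width. The previous lemma already establishes the shape of an arbitrary element of $(B \wr C_p)'$ as a tuple whose coordinates multiply to something in $B'$, so the present statement is really just a rewriting that (i) solves for the last coordinate explicitly and (ii) replaces the $B'$-factor with an explicit product of at most $cw(B)$ commutators.

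First, I would start with an arbitrary $w \in (B \wr C_p)'$ and apply Lemma \ref{form of comm}: write $w = (r_1, r_2, \ldots, r_{p-1}, r_p)$ where $r_1, \ldots, r_p \in B$ and the product (in some fixed order) of the $r_i$ lies in $B'$. Using the order that was identified in the previous proof, namely $r_{p-1}\cdots r_1 \cdot r_p = x$ with $x \in B'$, I solve for the last coordinate:
\begin{equation*}
r_p = r_1^{-1} r_2^{-1} \cdots r_{p-1}^{-1} \, x.
\end{equation*}
This isolates the dependence of $r_p$ on the other coordinates and on a single element $x \in B'$, while leaving $r_1, \ldots, r_{p-1}$ completely free in $B$.

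Next, I would invoke the definition of commutator width from Section 2: since $x \in B' = [B,B]$, there exist $f_1, \ldots, f_k, g_1, \ldots, g_k \in B$ with $k \leq cw(B)$ such that
\begin{equation*}
x = \prod_{j=1}^{k} [f_j, g_j].
\end{equation*}
Substituting this expression into the last coordinate yields precisely the stated wreath recursion.

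Conversely, for a complete argument one should also check that any tuple of the stated form indeed lies in $(B \wr C_p)'$; but this is immediate, since the product of its coordinates (in the appropriate order) equals $x = \prod_{j=1}^{k}[f_j,g_j] \in B'$, and Lemma \ref{form of comm} was an ``iff'' statement. There is essentially no genuine obstacle here: the only subtle point is being consistent about the ordering convention inherited from the proof of Lemma \ref{form of comm}, so that the explicit expression $r_1^{-1}\cdots r_{p-1}^{-1}\prod[f_j,g_j]$ matches the product condition. Everything else is direct substitution.
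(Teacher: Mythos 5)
Your proposal is correct and follows essentially the same route as the paper's own proof: apply Lemma~\ref{form of comm}, solve $r_{p-1}\cdots r_1 r_p = x$ for $r_p = r_1^{-1}\cdots r_{p-1}^{-1}x$, and expand $x \in B'$ as a product of at most $cw(B)$ commutators by the definition of commutator width. Your closing converse check is harmless but unnecessary, since the lemma as stated only asserts the forward direction.
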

\begin{proof}
According to the Lemma~\ref{form of comm} we have the following wreath recursion
\begin{align*}
w=(r_1, r_2, \ldots, r_{p-1},  r_p),
\end{align*}
where $r_i\in B$ and $r_{p-1} r_{p-2} \ldots r_2 r_1  r_p = x \in B'$. Therefore we can write $r_p = r_1\m \ldots r_{p-1}\m x$. We also can rewrite element $x\in B'$ as product of commutators $x = \prod \limits ^{k}_{j=1} [f_j,g_j]$ where $k\leq cw(B)$.
\end{proof}
\begin{lemma} \label{c_p_wr_b_elem_repr}
For any group $B$ and integer $p\geq 2, \, p \in \mathbb{N}$ if $w\in B \wr C_p$ is defined by the following wreath recursion
\begin{align*}
w=(r_1, r_2, \ldots, r_{p-1},  r_1\m \ldots r_{p-1}\m [f,g]),
\end{align*}
where $r_1, \ldots, r_{p-1}, f_j, g_j \in B$ then we can represent $w$ as commutator
\begin{align*}
w = [(a_{1,1},\ldots, a_{1,p})\sigma, (a_{2,1},\ldots, a_{2,p})],
\end{align*}
where
\begin{align*}
a_{1,i} &=  e \mbox{ for $1\leq i \leq p-1$ },\\
a_{2,1} &= (f\m)^{r_1\m \ldots r_{p-1}\m},\\
a_{2,i} &= r_{i-1} a_{2,i-1}\mbox{ for $2\leq i \leq p$},\\
a_{1,p} &= g^{a_{2,p}\m}.
\end{align*}
\end{lemma}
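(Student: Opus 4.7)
The plan is to verify the claim by direct computation of the commutator $[\alpha,\beta]$ with $\alpha=(a_{1,1},\ldots,a_{1,p})\sigma$ and $\beta=(a_{2,1},\ldots,a_{2,p})$, matching coordinates with the prescribed wreath recursion of $w$. First I would observe that the permutation part of $[\alpha,\beta]$ is $\sigma\cdot e\cdot\sigma\m\cdot e=e$, so the commutator lies in the base group $B^p$ and it suffices to identify its $p$ components.

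Next I would expand using the wreath product multiplication rule. Since $a_{1,i}=e$ for $1\leq i\leq p-1$, the only non-trivial contribution from $\alpha$ in the coordinate computation comes from $a_{1,p}$ combined with the cyclic shift $\sigma$. A direct computation shows that for $1\leq i\leq p-1$ the $i$-th coordinate of $[\alpha,\beta]$ collapses to $a_{2,i+1}a_{2,i}\m$, which by the recurrence $a_{2,i+1}=r_i a_{2,i}$ equals $r_i$ exactly, as required. This handles all but the last coordinate.

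The last coordinate evaluates to $a_{1,p}a_{2,1}a_{1,p}\m a_{2,p}\m$. Inserting $a_{2,1}\m a_{2,1}$ in the middle, I would rewrite this as $[a_{1,p},a_{2,1}]\cdot(a_{2,1}a_{2,p}\m)$. Unrolling the recurrence yields $a_{2,p}=r_{p-1}\cdots r_1 a_{2,1}$, hence $a_{2,1}a_{2,p}\m=r_1\m\cdots r_{p-1}\m$. Thus the $p$-th coordinate equals $[a_{1,p},a_{2,1}]\cdot r_1\m\cdots r_{p-1}\m$, and matching with the desired $r_1\m\cdots r_{p-1}\m[f,g]$ reduces everything to proving the single identity $[a_{1,p},a_{2,1}]=R\m[f,g]R$ where $R:=r_{p-1}\cdots r_1$.

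The main obstacle and the content of the lemma is therefore producing $a_{1,p},a_{2,1}\in B$ whose commutator is a prescribed conjugate of $[f,g]$; this is exactly what the ansatz accomplishes. With $a_{2,1}=(f\m)^{R\m}=R\m f\m R$ one gets $a_{2,p}=RR\m f\m R=f\m R$, so $a_{2,p}\m=R\m f$ and $a_{1,p}=g^{a_{2,p}\m}=R\m fgf\m R$. A short conjugation calculation then confirms $[a_{1,p},a_{2,1}]=R\m[f,g]R$, which I view as the technical heart of the proof. Substituting back finishes the verification of all $p$ coordinates and hence the lemma.
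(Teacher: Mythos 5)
Your proof is correct and takes essentially the same route as the paper: both verify by direct coordinate computation that the commutator of $(e,\ldots,e,a_{1,p})\sigma$ and $(a_{2,1},\ldots,a_{2,p})$ has trivial permutation part, equals $r_i$ in coordinates $1\leq i\leq p-1$ via $a_{2,i+1}a_{2,i}\m=r_i$, and produces $r_1\m\cdots r_{p-1}\m[f,g]$ in the last coordinate. The only cosmetic difference is bookkeeping: the paper massages $a_{3,p}$ into $(a_{2,p}a_{2,1}\m)\m\bigl[(a_{2,1}\m)^{a_{2,p}a_{2,1}\m},a_{1,p}^{a_{2,p}}\bigr]$ and checks the ansatz solves the resulting system, while you reduce everything to the single identity $[a_{1,p},a_{2,1}]=R\m[f,g]R$ with $R=r_{p-1}\cdots r_1$ and verify it by substitution.
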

\begin{proof}
Let us to consider the following commutator
\begin{align*}
\kappa &= (a_{1,1},\ldots, a_{1,p})\sigma \cdot (a_{2,1},\ldots, a_{2,p}) \cdot (a_{1,p}\m,a_{1,1}\m,\ldots, a_{1,p-1}\m)\sigma\m \cdot (a_{2,1}\m,\ldots, a_{2,p}\m)\\
&= (a_{3,1}, \ldots, a_{3,p}),
\end{align*}
where
\begin{align*}
a_{3,i} = a_{1,i}a_{2,1 + (i \bmod p)}a_{1,i}\m a_{2,i}\m.
\end{align*}
At first we compute the following
\begin{align*}
a_{3,i} = a_{1,i}a_{2,i+1}a_{1,i}\m a_{2,i}\m = a_{2,i+1} a_{2,i}\m = r_{i} a_{2,i} a_{2,i}\m=  r_i, \mbox{ for $1\leq i \leq p-1$}.
\end{align*}
Then we make some transformation of $a_{3,p}$:
\begin{align*}
a_{3,p}&=a_{1,p}a_{2,1}a_{1,p}\m a_{2,p}\m\\
&=(a_{2,1} a_{2,1}\m) a_{1,p}a_{2,1}a_{1,p}\m a_{2,p}\m\\
&=a_{2,1} [a_{2,1}\m, a_{1,p}] a_{2,p}\m\\
&=a_{2,1}a_{2,p}\m a_{2,p} [a_{2,1}\m, a_{1,p}] a_{2,p}\m\\
&= (a_{2,p} a_{2,1}\m)\m  [(a_{2,1}\m)^{a_{2,p}}, a_{1,p}^{a_{2,p}}]\\
&= (a_{2,p} a_{2,1}\m)\m  [(a_{2,1}\m)^{a_{2,p} a_{2,1}\m}, a_{1,p}^{a_{2,p}}].
\end{align*}

We transform commutator $\kappa$ in such way that it is similar to the form of $w$. This gives us equations with unknown variables $a_{i,j}$:
\begin{eqnarray*}
\left\{
\begin{matrix}
a_{1,i}a_{2,i+1}a_{1,i}\m a_{2,i}\m &=& r_i, \mbox{ for $1\leq i \leq p-1$},\\
(a_{2,p} a_{2,1}\m)\m &=& r_1\m \ldots r_{p-1}\m,\\
(a_{2,1}\m)^{a_{2,p} a_{2,1}\m} &=& f,\\
a_{1,p}^{a_{2,p}} &=& g.
\end{matrix}\right.
\end{eqnarray*}
In order to prove required statement it is enough to find at least one solution of equations. We set the following
\begin{eqnarray*}
a_{1,i} &=  e \mbox{ for $1\leq i \leq p-1$ }.
\end{eqnarray*}
Then we have
\begin{eqnarray*}
\left\{
\begin{matrix}
a_{2,i+1} a_{2,i}\m &=& r_i, \mbox{ for $1\leq i \leq p-1$},\\
(a_{2,p} a_{2,1}\m)\m &=& r_1\m \ldots r_{p-1}\m,\\
(a_{2,1}\m)^{a_{2,p} a_{2,1}\m} &=& f,\\
a_{1,p}^{a_{2,p}} &=& g.
\end{matrix}\right.
\end{eqnarray*}
Now we can see that the form of the commutator $\kappa$ is similar to the form of $w$.

Let us make the following notation
\begin{align*}
r' = r_{p-1} \ldots r_1.
\end{align*}
We note that from the definition of $a_{2, i}$ for $2\leq i \leq p$ it follows that
\begin{align*}
r_i = a_{2,i+1} a_{2,i}\m, \mbox{ for $1\leq i \leq p-1$}.
\end{align*}
Therefore
\begin{align*}
r' &=  (a_{2,p} a_{2,p-1}\m) (a_{2,p-1} a_{2,p-2}\m)\ldots (a_{2,3} a_{2,2}\m) (a_{2,2} a_{2,1}\m)\\
&= a_{2,p} a_{2,1}\m.
\end{align*}
And then
\[
(a_{2,p} a_{2,1}\m)\m = (r')\m = r_1\m \ldots r_{p-1}\m.
\]
Finally let us to compute the following
\begin{align*}
(a_{2,1}\m)^{a_{2,p} a_{2,1}\m} = (((f\m)^{r_1\m \ldots r_{p-1}\m})\m)^{r'} = (f^{(r')\m})^{r'}  = f,\\
a_{1,p}^{a_{2,p}} = (g^{a_{2,p}\m})^{a_{2,p}} = g.
\end{align*}
And now we conclude that
\begin{align*}
a_{3,p} = r_1\m \ldots r_{p-1}\m [f,g].
\end{align*}
Thus, the commutator $\kappa$ is presented exactly in the similar form as $w$ has.
\end{proof}
For future use we formulate previous lemma for the case $p=2$
\begin{corollary} \label{c_2_wr_b_elem_repr}
If $B$ is any group and $w\in B \wr C_2$ is defined by the following wreath recursion
\begin{align*}
w=(r_1,  r_1\m [f,g]),
\end{align*}
where $r_1, f, g \in B$, then $w$ can be represent as commutator
\begin{align*}
w = [(e, a_{1,2})\sigma, (a_{2,1}, a_{2,2})],
\end{align*}
where
\begin{align*}
a_{2,1} &= (f\m)^{r_1\m},\\
a_{2,2} &= r_{1} a_{2,1},\\
a_{1,2} &= g^{a_{2,2}\m}.
\end{align*}
\end{corollary}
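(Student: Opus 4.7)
The plan is to obtain this corollary as a direct specialization of Lemma~\ref{c_p_wr_b_elem_repr} with $p = 2$. In the general statement, the range $1 \leq i \leq p-1$ collapses to the single index $i = 1$, so the condition $a_{1,i} = e$ becomes simply $a_{1,1} = e$ (hence the absence of an $a_{1,1}$ equation in the corollary). The product $r_1\m \ldots r_{p-1}\m$ reduces to the single factor $r_1\m$, giving $a_{2,1} = (f\m)^{r_1\m}$; the recursion $a_{2,i} = r_{i-1} a_{2,i-1}$ at $i = 2$ yields $a_{2,2} = r_1 a_{2,1}$; and $a_{1,p} = g^{a_{2,p}\m}$ becomes $a_{1,2} = g^{a_{2,2}\m}$. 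These are exactly the assignments stated in the corollary, and no further argument is strictly needed.

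To make the statement self-contained, I would attach a short independent verification via the wreath recursion multiplication rule. Using $\sigma^2 = e$ in $C_2$, the inverse $((e,a_{1,2})\sigma)\m = (a_{1,2}\m, e)\sigma$, and a routine expansion, one obtains
\begin{align*}
[(e, a_{1,2})\sigma,\ (a_{2,1}, a_{2,2})] = (a_{2,2} a_{2,1}\m,\ a_{1,2}\, a_{2,1}\, a_{1,2}\m\, a_{2,2}\m).
\end{align*}
The first coordinate equals $r_1 a_{2,1} a_{2,1}\m = r_1$, matching $w$. For the second coordinate, substituting the explicit forms $a_{2,1} = r_1\m f\m r_1$, $a_{2,2} = f\m r_1$ and $a_{1,2} = r_1\m f g f\m r_1$ and cancelling the neighbouring $r_1 r_1\m$ pairs telescopes the expression down to $r_1\m f g f\m g\m = r_1\m [f,g]$, which is the second coordinate of $w$.

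The only place where care is needed is the convention for the wreath recursion product, in particular the rule that the tuple of $h$ is permuted by $\sigma_g$ before coordinate-wise multiplication, and the sign convention $a^b = bab\m$ used throughout the paper; once these are kept straight, both the specialization route and the direct verification reduce to bookkeeping.
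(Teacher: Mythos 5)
Your proposal matches the paper exactly: the paper offers no separate proof, introducing the corollary with the words ``For future use we formulate previous lemma for the case $p=2$,'' which is precisely your specialization of Lemma~\ref{c_p_wr_b_elem_repr} at $p=2$ (with $a_{1,1}=e$, $a_{2,1}=(f\m)^{r_1\m}$, $a_{2,2}=r_1a_{2,1}$, $a_{1,2}=g^{a_{2,2}\m}$). Your supplementary direct verification is also correct under the paper's conventions --- one checks $[(e,a_{1,2})\sigma,(a_{2,1},a_{2,2})]=(a_{2,2}a_{2,1}\m,\ a_{1,2}a_{2,1}a_{1,2}\m a_{2,2}\m)=(r_1,\ r_1\m[f,g])$ --- so it is a harmless bonus beyond what the paper records.
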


\begin{lemma} \label{comm B_k}
For any group $B$ and integer $p\geq 2$ inequality
\begin{align*}
cw(B\wr C_p) \leq \max(1,cw(B))
\end{align*}
holds.
\end{lemma}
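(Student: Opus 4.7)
The plan is to combine Lemma~\ref{form of comm_2}, which puts every element of $(B\wr C_p)'$ into a canonical tuple form whose last coordinate absorbs a product of at most $cw(B)$ commutators of $B$, with the explicit single-commutator construction of Lemma~\ref{c_p_wr_b_elem_repr}, which already handles the case of exactly one bracket $[f,g]$ in the last coordinate. The idea is to peel off one $B$-commutator at a time from the last slot of $w$, using Lemma~\ref{c_p_wr_b_elem_repr} once on a tuple that still carries all the $r_i$-information, and then trivial single-coordinate commutators for the remaining brackets.

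First I would take an arbitrary $w\in(B\wr C_p)'$ and apply Lemma~\ref{form of comm_2} to write
$$w=(r_1,r_2,\ldots,r_{p-1},\,r_1\m\cdots r_{p-1}\m[f_1,g_1][f_2,g_2]\cdots[f_k,g_k])$$
with $k\leq cw(B)$. Then I would factor $w=w_1 w_2\cdots w_k$ by setting
$$w_1=(r_1,\ldots,r_{p-1},\,r_1\m\cdots r_{p-1}\m[f_1,g_1]),\qquad w_j=(e,\ldots,e,[f_j,g_j])\text{ for }j\geq 2.$$
Since each $w_j$ with $j\geq 2$ has trivial permutation part and is supported on a single coordinate, the $w_j$'s multiply componentwise and the identity $w=w_1w_2\cdots w_k$ is immediate.

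Now Lemma~\ref{c_p_wr_b_elem_repr} realizes $w_1$ as a single commutator in $B\wr C_p$, and for $j\geq 2$ the trivial identity $w_j=[(e,\ldots,e,f_j),(e,\ldots,e,g_j)]$ exhibits $w_j$ as a single commutator as well. This writes $w$ as a product of $k\leq cw(B)$ commutators, which gives the claimed bound whenever $cw(B)\geq 1$. The boundary case $cw(B)=0$ must be handled separately: then $k=0$ and the bracketed product is empty, yet $w$ can still be nontrivial in $(B\wr C_p)'$, so I would apply Lemma~\ref{c_p_wr_b_elem_repr} once with $f=g=e$ to express $w$ as a single commutator, yielding $cw(B\wr C_p)\leq 1=\max(1,0)$.

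I do not expect a real obstacle here, since the heavy lifting was done in proving Lemma~\ref{c_p_wr_b_elem_repr}. The only genuine point to check is the decomposition $w=w_1w_2\cdots w_k$, but this is automatic from the triviality of the permutation parts of the $w_j$ for $j\geq 2$. The subtle place to be careful is the case split on whether $cw(B)$ is zero or positive, since the explicit formula supplies exactly one commutator regardless of $k$, which is what forces the $\max(1,cw(B))$ (and not $cw(B)$) on the right-hand side.
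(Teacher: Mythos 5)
Your proposal is correct and follows essentially the same route as the paper: the paper uses the identical factorization $w=(r_1,\ldots,r_{p-1},\,r_1\m\cdots r_{p-1}\m[f_1,g_1])\cdot\prod_{j=2}^{k}[(e,\ldots,e,f_j),(e,\ldots,e,g_j)]$, realizes the first factor as a single commutator via Lemma~\ref{c_p_wr_b_elem_repr}, and counts the remaining $k-1$ trivial single-coordinate commutators. Your explicit handling of the boundary case $cw(B)=0$ (applying Lemma~\ref{c_p_wr_b_elem_repr} with $f=g=e$) is a detail the paper leaves implicit in its $\max(1,cw(B))$ bound, and is a welcome clarification rather than a divergence.
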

\begin{proof}
We can represent any $w\in (B \wr C_p)'$ by Lemma~\ref{form of comm} with the following wreath recursion
\begin{align*}
w&=(r_1, r_2, \ldots, r_{p-1},  r_1\m \ldots, r_{p-1}\m  \prod \limits_{j=1}^{k} [f_{j},g_{j}])\\
 &= (r_1, r_2, \ldots, r_{p-1},  r_1\m \ldots, r_{p-1}\m  [f_{1},g_{1}]) \cdot \prod \limits_{j=2}^{k} [(e, \ldots, e, f_j), (e, \ldots, e, g_j)],
\end{align*}
where $r_1, \ldots, r_{p-1}, f_j, g_j \in B$, $k \leq cw(B)$. Now by the Lemma~\ref{c_p_wr_b_elem_repr}  we  can see that $w$ can be represented as product of $\max(1, cw(B))$ commutators.
\end{proof}

\begin{corollary}
If  $W = C_{p_k} \wr \ldots \wr C_{p_1}$ then for $k\geq 2$
$cw(W) =1$.
\end{corollary}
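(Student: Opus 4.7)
The plan is to prove the corollary by induction on $k$, with Lemma~\ref{comm B_k} doing essentially all of the work. Using associativity of the permutational wreath product, I regroup
\[
W = (C_{p_k}\wr\ldots\wr C_{p_2})\wr C_{p_1},
\]
set $B = C_{p_k}\wr\ldots\wr C_{p_2}$, and apply Lemma~\ref{comm B_k} to obtain $cw(W)\leq \max(1, cw(B))$.

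For the base case $k=2$, the group $B = C_{p_2}$ is abelian, so $cw(B) = 0$ and hence $cw(W)\leq \max(1,0) = 1$. For the inductive step $k \geq 3$, $B$ is itself a wreath product of $k-1 \geq 2$ cyclic groups, so the inductive hypothesis gives $cw(B) = 1$, and Lemma~\ref{comm B_k} again yields $cw(W)\leq 1$.

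It remains to establish the matching lower bound $cw(W) \geq 1$, which amounts to verifying that $[W,W]$ is non-trivial. Since $k\geq 2$ and every $p_i\geq 2$, the top factor $C_{p_1}$ acts by a non-trivial permutation on the coordinates of the non-trivial base group, so $W$ is non-abelian and $[W,W]\neq \{e\}$; any non-identity commutator thus has commutator length exactly one. The main subtlety, and essentially the only one, is checking that associativity of the permutational wreath product is compatible with the hypotheses of Lemma~\ref{comm B_k}, which is stated for $B\wr C_p$ with $B$ an \emph{arbitrary} group, so that the inductively obtained value of $cw(B)$ can legitimately be fed into the lemma. Once this point is granted, the argument is a direct two-line induction.
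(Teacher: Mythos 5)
Your proof is correct and follows essentially the same route as the paper: induction on the number of factors, regrouping $W = (C_{p_k}\wr\ldots\wr C_{p_2})\wr C_{p_1}$ via associativity, applying Lemma~\ref{comm B_k} with the inductively known $cw(B)$, and using non-commutativity of the wreath product for the lower bound $cw(W)\geq 1$. Your version is in fact slightly more careful than the paper's, since you state the lower-bound argument for every $k$ (the paper only mentions it in the base case) and you flag explicitly that Lemma~\ref{comm B_k} applies to an arbitrary passive group $B$, which is what legitimizes the inductive step.
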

\begin{proof}
If $W= C_{p_k} \wr C_{p_{k-1}}$ then according to Lemma \ref{comm B_k} implies that $cw(C_{p_k} \wr C_{p_{k-1}})=1$, because $C_{p_k} \wr C_{p_{k-1}}$ is not commutative group so $cw(W)>0$. If $W= C_{p_k} \wr C_{p_{k-1}} \wr C_{p_{k-2}}$ then according to the inequality  $cw(C_{p_k} \wr C_{p_{k-1}} \wr C_{p_{k-2}}) \leq \max(1,cw(B))$ from Lemma \ref{comm B_k} we obtain $cw(W)=1$. Analogously if $W = C_{p_k} \wr \ldots \wr C_{p_1}$ and supposition of induction for $C_{p_{k}} \wr \ldots \wr C_{p_2}$ holds then using that permutational wreath product is associative construction we obtain from the inequality of Lemma \ref{comm B_k} and $cw( C_{p_k} \wr \ldots \wr C_{p_2})=1$ that $cw(W)=1$.
\end{proof}

\begin{corollary} \label{cw_syl_p_s_p_k_eq_1_and_syl_p_a_p_k_eq_1}  Commutator width  $cw(Syl_p(S_{p^k})) = 1$ for prime $p$ and $k>1$ and commutator width $cw(Syl_p (A_{p^k})) = 1$ for prime $p>2$ and $k>1$.
\end{corollary}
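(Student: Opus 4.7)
The plan is to reduce the corollary directly to the preceding corollary on iterated wreath products of cyclic groups. The key input is the classical Kaloujnine description of $Syl_p(S_{p^k})$ as the iterated permutational wreath product
\[
Syl_p(S_{p^k}) \simeq \underbrace{C_p \wr C_p \wr \cdots \wr C_p}_{k \text{ factors}},
\]
which follows from the standard computation $|Syl_p(S_{p^k})| = p^{1+p+p^2+\cdots+p^{k-1}}$ matched against the order of the iterated wreath product, together with the transitive action on $\{1,\ldots,p^k\}$. Since permutational wreath product is associative, this is exactly the group $W = C_{p_k} \wr \cdots \wr C_{p_1}$ of the previous corollary with all $p_i = p$, so for $k \geq 2$ we get $cw(Syl_p(S_{p^k})) = 1$ immediately.

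For the alternating case with odd prime $p > 2$, the first step is to observe that the index $[S_{p^k} : A_{p^k}] = 2$ is coprime to $p$. Hence every Sylow $p$-subgroup of $S_{p^k}$ already lies inside $A_{p^k}$, and by Sylow's theorem this yields the identification
\[
Syl_p(A_{p^k}) = Syl_p(S_{p^k}) \simeq \underbrace{C_p \wr \cdots \wr C_p}_{k}.
\]
Applying the preceding corollary once more gives $cw(Syl_p(A_{p^k})) = 1$ for $k \geq 2$.

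For $k \geq 2$ the group is non-abelian (the first wreath factor $C_p \wr C_p$ is already non-abelian), so its commutator subgroup is nontrivial and $cw \geq 1$; combined with the upper bound above we get equality. I expect no real obstacle here: the entire content is packaging the two standard facts (the Kaloujnine structure of $Syl_p(S_{p^k})$, and the equality $Syl_p(A_{p^k}) = Syl_p(S_{p^k})$ for odd $p$) and invoking the previous corollary. The only point requiring a sentence of justification is the identification of Sylow subgroups in $A_{p^k}$ with those in $S_{p^k}$, which is immediate from coprimality of the index with $p$.
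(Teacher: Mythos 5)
Your proposal is correct and follows essentially the same route as the paper: both identify $Syl_p(S_{p^k})$ with the $k$-fold iterated wreath product $C_p \wr \cdots \wr C_p$ (the paper cites Kaloujnine and Pawlik where you sketch the order computation), invoke the preceding corollary on iterated wreath products, and then transfer the result to $A_{p^k}$ for $p>2$ via $Syl_p(S_{p^k}) \simeq Syl_p(A_{p^k})$ (the paper cites Dmitruk--Suschansky where you supply the one-line coprimality argument). Your added justifications, including the non-abelian lower bound, only make explicit what the paper leaves to its references.
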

\begin{proof}
Since $Syl_p(S_{p^k}) \simeq  \stackrel{k}{ \underset{\text{\it i=1}}{\wr }}C_p$  (see \cite{Kal, Paw} 
 then $cw(Syl_p(S_{p^k}))=1$. As well known in case $p>2$ $Syl_p S_{p^k} \simeq Syl_p A_{p^k} $ (see \cite{Dm}, then $cw(Syl_p(A_{p^k}))=1$. 
\end{proof}

\begin {definition}
Let us call the index of automorphism $\alpha$ on $X^l$ a number of active v.p. of $\alpha$ on $X^l$, denote it by $In_l(\alpha)$.
\end {definition}


The following Lemma gives us a criteria when the element from the group $Syl_2 S_{2^k}$ belong to $(Syl_2 S_{2^k})'$. 

\begin{lemma} \label{comm B_k old} An element $g \in B_k$ belongs to commutator subgroup $B'_k$ iff $g$ has even index on ${{X}^{l}}$ for all $0\leq l < k$.
 \end{lemma}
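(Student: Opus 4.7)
The plan is to induct on $k$, combining Lemma~\ref{form of comm} with one preliminary observation: for each $0\leq l<k$, the map $\phi_l\colon B_k\to \mathbb{Z}/2$ defined by $\phi_l(g)=In_l(g)\bmod 2$ is a group homomorphism. I would verify this directly from the wreath recursion multiplication rule $g\cdot h=(g_1 h_{\sigma_g(1)},\ldots,g_d h_{\sigma_g(d)})\sigma_g\sigma_h$, which applied recursively gives $(gh)|_v=g|_v\cdot h|_{g\cdot v}$ for every vertex $v\in X^l$, where $g\cdot v$ denotes the image of $v$ under the tree action of $g$. Consequently the v.p.\ of $gh$ at $v$ is the product in $C_2$ of the v.p.\ of $g$ at $v$ and of $h$ at $g\cdot v$; summing parities over $v\in X^l$ and using that $v\mapsto g\cdot v$ is a bijection of $X^l$ yields $\phi_l(gh)=\phi_l(g)+\phi_l(h)$. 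For $l=0$ this reduces to $\sigma_{gh}=\sigma_g\sigma_h$.

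With this in hand, the necessity direction is automatic: since $\phi_l$ lands in an abelian group, $B_k'\subseteq\ker\phi_l$ for every $l<k$, so every element of $B_k'$ has even index on each level.

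For sufficiency I would induct on $k$. The base case $k=1$ is immediate since $B_1=C_2$, $B_1'=\{e\}$, and evenness of $In_0(g)$ forces $g=e$. For the inductive step, write $g\in B_k=B_{k-1}\wr C_2$ as $g=(r_1,r_2)\pi$; evenness of $In_0(g)$ forces $\pi=e$, so $g=(r_1,r_2)$. The v.p.\ of $g$ at any level-$l$ vertex with $l\geq 1$ coincides with the v.p.\ of $r_1$ or $r_2$ at the corresponding level-$(l-1)$ vertex of its subtree, giving $In_l(g)=In_{l-1}(r_1)+In_{l-1}(r_2)$ for $1\leq l<k$. Hence $In_{l-1}(r_1)\equiv In_{l-1}(r_2)\pmod{2}$, and applying the homomorphism property of $\phi_{l-1}$ inside $B_{k-1}$ to the product $r_1 r_2$ yields $In_{l-1}(r_1 r_2)\equiv 0\pmod{2}$ for every $0\leq l-1\leq k-2$. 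The induction hypothesis then produces $r_1 r_2\in B_{k-1}'$, and Lemma~\ref{form of comm} concludes $g=(r_1,r_2)\in (B_{k-1}\wr C_2)'=B_k'$.

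The only real technical point is the homomorphism property of $\phi_l$; once that is established, both directions drop out cleanly by Lemma~\ref{form of comm} and induction on $k$.
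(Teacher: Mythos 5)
Your proof is correct, and in the sufficiency direction it is essentially the paper's argument: induct on $k$, note that even index on $X^0$ kills the root permutation, reduce $g=(r_1,r_2)$ to the product $r_1r_2$ having even indexes in $B_{k-1}$, and invoke Lemma~\ref{form of comm} to conclude $g\in B_k'$. Where you genuinely depart from the paper is the necessity direction. The paper argues by hand: it manipulates wreath recursions of $(\alpha\beta\alpha^{-1})\beta^{-1}$, tracks how conjugation permutes active vertex permutations level by level, and runs an induction on the level $l$ and the index; along the way it also establishes (and needs, for later results such as the description of $G_k'$) the stronger facts that commutators realize \emph{arbitrary} even index patterns on each $X^l$ and that the resulting set is closed under multiplication and conjugation. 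You instead isolate the single algebraic fact underlying all of that bookkeeping: the section rule $(gh)_{(v)}=g_{(v)}h_{(g(v))}$ makes $\phi_l(g)=In_l(g)\bmod 2$ a homomorphism $B_k\to\mathbb{Z}/2$ (the bijectivity of $v\mapsto g(v)$ on $X^l$ is exactly the right justification), whence $B_k'\subseteq\ker\phi_l$ immediately. This is cleaner and more rigorous than the paper's informal parity computation, and it also silently delivers your inductive-step identity $\phi_{l-1}(r_1r_2)=\phi_{l-1}(r_1)+\phi_{l-1}(r_2)$, which the paper again handles by explicit recursion-multiplication. The trade-off is that your kernel argument proves only the stated iff and not the paper's extra structural claim that every even-index configuration is attained by a single commutator; for the lemma as stated your route is the better one, but that surplus information is part of what the paper reuses downstream.
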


\begin{proof}
Let us prove the ampleness by induction by a number of level $l$ and index of $g$ on $X^l$.
 We first show that our statement for base of the induction is true. Actually, if $\alpha, \beta \in B_{0}$ then $(\alpha \beta {{\alpha }^{-1}}) \beta^{-1}$ determine a trivial v.p. on $X^{0}$. If $\alpha, \beta\in B_{1}$ and $\beta$ has an odd index on $X^1$, then $(\alpha \beta {{\alpha }^{-1}})$ and $\beta^{-1}$ have the same index on $X^1$. Consequently, in this case an index of the product $(\alpha \beta {{\alpha }^{-1}}) \beta^{-1}$ can be 0 or 2. Case where $\alpha, \beta \in B_{1}$ and has even index on $X^1$, needs no proof, because the product and the sum of even numbers is an even number.

To finish the proof it suffices to assume that for $B_{l-1}$ statement holds and prove that it holds for $B_l$.
Let $\alpha, \beta$ are an arbitrary automorphisms from $Aut X^{[k]}$ and $\beta$ has index $x$ on $X^{l}, \, l<k$, where $0 \leq x \leq 2^{l} $.
   A conjugation of an automorphism $\beta $  by arbitrary $\alpha \in Aut{{X}^{[k]}}$ gives us arbitrary permutations of $X^l$ where $\beta $ has active v.p.

 Thus following product $(\alpha \beta {{\alpha }^{-1}}) \beta ^{-1}$ admits all possible even indexes on $X^l,  l<k$ from 0 to $2x$. In addition $[\alpha, \beta]$ can has arbitrary permitted assignment (arrangement) of v.p. on $X^l$. Let us present $B_k$ as $B_k=B_l \wr B_{k-l}$, so elements $\alpha, \beta$ can be presented in form of wreath recursion $\alpha = (h_{1},...,h_{2^l })\pi_1, \,  \beta = (f_{1},...,f_{2^l })\pi_2$, $h_{i}, f_{i} \in B_{k-l} ,\  0<i \leq 2^l$ and $h_{i}, f_{j}$ corresponds to sections of automorphism in vertices of $X^l$ of isomorphic subgroup to $B_l$ in $Aut X^{[k]}$.
Actually, the parity of this index are formed independently of the action of
$Aut X^{[l]}$ on $X^l$. So this index forms as a result of multiplying of elements of commutator presented as wreath recursion $(\alpha \beta \alpha^{-1}) \cdot \beta ^{-1} = (h_{1},...,h_{2^l})\pi_1 \cdot (f_{1},...,f_{2^l })\pi_2= (h_{1},...,h_{2^l}) (f_{\pi_1(1)},...,f_{\pi_1(2^l)})\pi_1 \pi_2 $, where $h_{i}, f_{j} \in {B}_{k-l}$, $l<k$ and besides automorphisms corresponding to $h_{i}$ are $x$ automorphisms which has active v.p. on $X^l$. Analogous automorphisms $h_{i}$ has number of active v.p. equal to $x$. As a result of multiplication we have automorphism with index $2i:$ $0 \leq 2i \leq 2x$.
Consequently, commutator $[\alpha, \beta]$ has arbitrary even indexes on $X^m$, $m<l$ and we showed by induction  that it has even index on $X^l$.

Let us prove this Lemma by induction on level $k$. Let us to suppose that we prove current Lemma (both sufficiency and necessity) for $k-1$. Then we rewrite element $g \in B_k$ with wreath recursion
\[
g = (g_1, g_2)\sigma^i,
\]
where $i \in \{0, 1\}$.

Now  we consider sufficiency. 

Let $g \in B_k$ and $g$ has all even indexes on $X^j$ $0 \leq j < k$ we need to show that $g \in B'_k$. According to condition of this Lemma $g_1 g_2$ has even indexes. An element $g$ has form $g=(g_1, g_2)$, where $g_1, g_2 \in B_{k-1}$, and products $g_1 g_2 = h \in B'_{k-1}$ because $h \in B_{k-1}$ and for $B_{k-1}$ induction assumption holds. Therefore all products of form $g_1 g_2$ indicated in formula \ref{Meld} belongs to $B'_{k-1}$. Hence, from Lemma \ref{form of comm} follows that $g= (g_1, g_2 ) \in B'_k$.
\end{proof}


An automorphisms group of the subgroup ${C_{2}^{{{2}^{k-1}-1}}}$ is based on permutations of copies of $C_2$. Orders of $\stackrel{k-1}{ \underset{\text{\it i=1}}{\wr } }C_2  $ and ${C_{2}^{{{2}^{k-1}-1}}}$ are equals. A homomorphism from $\stackrel{k-1}{ \underset{\text{\it i=1}}{\wr } }C_2  $ into $Aut({C_{2}^{{{2}^{k-1}-1}}})$ is injective because a kernel of action $\stackrel{k-1}{ \underset{\text{\it i=1}}{\wr } }C_2  $ on ${C_{2}^{{{2}^{k-1}-1}}}$ is trivial, action is effective. The group $G_k$ is a proper subgroup of index 2 in the group $\stackrel{k}{ \underset{\text{\it i=1}}{\wr } }C_2  $ \cite{Dm, SkAr, Sk }.
The following theorem can be used for proving structural property of Sylow subgroups.
\begin{theorem} \label{max}
\textbf{A maximal 2-subgroup}  of $Aut{{X}^{\left[ k \right]}}$ acting by even permutations on ${{X}^{k}}$ has the structure of the semidirect product $G_k \simeq  B_{k-1} \ltimes W_{k-1} $ and is isomorphic to $Syl_2A_{2^k}$. Also $G_k < B_k$.
\end{theorem}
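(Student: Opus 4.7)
The plan is to prove the theorem by induction on $k$, with the base case $k=1$ being trivial since $G_1 = \langle e \rangle$ and $A_2$ is itself trivial. The inductive hypothesis is that $G_{k-1}$ is a Sylow $2$-subgroup of $A_{2^{k-1}}$, and in particular is normal of index $2$ in $B_{k-1}$. At the induction step, I would verify three claims in turn: (i) $G_k$ is a subgroup of $B_k$ with the correct index, (ii) $G_k$ is a Sylow $2$-subgroup of $A_{2^k}$ and hence a (the) maximal $2$-subgroup of $\Aut X^{[k]} = B_k$ whose elements act by even permutations on $X^k$, and (iii) $G_k$ admits the claimed semidirect decomposition.

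For (i), the crucial observation is that $B_{k-1}/G_{k-1} \simeq C_2$ is abelian, so for base elements $(g_1,g_2), (h_1,h_2)$ with $g_1 g_2, h_1 h_2 \in G_{k-1}$ one has
\[
(g_1 h_1)(g_2 h_2) \equiv (g_1 g_2)(h_1 h_2) \equiv e \pmod{G_{k-1}},
\]
while $\sigma$-conjugation merely swaps the two coordinates and so preserves the defining condition. A direct count then yields $|G_k| = 2 \cdot |B_{k-1}| \cdot |G_{k-1}| = |B_{k-1}|^2 = 2^{2^k - 2}$, which simultaneously shows $G_k < B_k$ with index $2$ and matches the known order of $Syl_2 A_{2^k}$. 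For (ii), the parity argument is standard: on $X^k = X^{k-1} \sqcup X^{k-1}$ a base element $(g_1, g_2)$ acts with total sign $\operatorname{sgn}(g_1)\operatorname{sgn}(g_2) = \operatorname{sgn}(g_1 g_2) = +1$ by induction, while the root swap $\sigma$ is a product of $2^{k-1}$ transpositions, which is even for $k \geq 2$. Combined with the order match, $G_k$ must be a Sylow $2$-subgroup of $A_{2^k}$, and hence also the maximal $2$-subgroup of $B_k$ acting by even permutations.

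The delicate step is (iii), the semidirect decomposition $G_k \simeq B_{k-1} \ltimes W_{k-1}$. My approach would be to take the $B_{k-1}$-factor to be the diagonal embedding $D = \{(g,g) : g \in B_{k-1}\}$, which lies in $G_k$ because $g^2 \in G_{k-1}$ (since $B_{k-1}/G_{k-1}$ has exponent $2$) and is manifestly isomorphic to $B_{k-1}$. The complement $W_{k-1}$ should then be a subgroup of order $|B_{k-1}|$ meeting $D$ trivially and normalised by $D$, built from $\sigma$ together with suitable elements of the form $(e,h)$ with $h \in G_{k-1}$. The main obstacle is choosing $W_{k-1}$ correctly: the naive candidate $\{(e,h)\sigma^\epsilon : h \in G_{k-1},\, \epsilon \in \{0,1\}\}$ is not closed under multiplication, since $(e,h)\sigma \cdot (e,h') = (h', h)\sigma$ leaves the set, so one needs an appropriately twisted version and must then check closure, trivial intersection with $D$, and that conjugation by $D$ induces the required wreath-recursion action on $W_{k-1}$.
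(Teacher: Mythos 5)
Your parts (i) and (ii) are sound and essentially the standard argument. Note, though, that the paper itself contains no proof of this theorem at all --- it defers entirely to \cite{SkAr} --- so the comparison is with the argument there, which establishes exactly your (i)--(ii): closure of $G_k$ via $B_{k-1}/G_{k-1}\simeq C_2$, the count $|G_k|=2\,|B_{k-1}|\,|G_{k-1}|=2^{2^k-2}$ giving index $2$ in $B_k$ and the Sylow order of $A_{2^k}$, and the parity computation (sign is a homomorphism, the root swap is a product of $2^{k-1}$ transpositions) giving evenness and hence maximality.

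The genuine gap is your step (iii), and the diagonal $D=\{(g,g):g\in B_{k-1}\}$ is a wrong turn rather than an unfinished detail. In this theorem $W_{k-1}$ denotes (see the paragraph preceding the theorem in the paper) the even-weight subgroup $\simeq C_2^{2^{k-1}-1}$ of the last-level group $C_2^{2^{k-1}}$, i.e.\ automorphisms trivial on $X^{[k-1]}$ with an even number of active vertex permutations on $X^{k-1}$. The diagonal meets this subgroup nontrivially: for any $g\in B_{k-1}$ whose active vertex permutations all lie on the last level of its tree, the element $(g,g)$ lies in $W_{k-1}$, so $|D\cap W_{k-1}|=2^{2^{k-2}}$; for $k=2$ one even has $D=W_1=\{e,(\tau,\tau)\}$, so $D$ cannot complement $W_{k-1}$. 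Your sketched replacement complement also cannot be repaired within the proposed shape: any subgroup containing $(e,h)\sigma$ contains its square $(h,h)\in D$, so trivial intersection with $D$ forces $h=e$; and if it contains $\sigma$ together with some $(e,h)$, it contains $\sigma(e,h)\sigma^{-1}\cdot(e,h)=(h,h)$ as well, while the elements $(e,h)$, $h\in G_{k-1}$, alone form a subgroup of order $2^{2^{k-1}-2}$, only half of the required $|W_{k-1}|=2^{2^{k-1}-1}$. The correct choice is the canonical rigid embedding $\tilde{B}_{k-1}<B_k$, consisting of automorphisms all of whose vertex permutations at level $k-1$ are trivial: it lies in $G_k$ (zero active vertex permutations on the last level, hence even action on $X^k$), it is a section of the restriction epimorphism $B_k\to B_{k-1}$ whose kernel is $C_2^{2^{k-1}}$, and $W_{k-1}=G_k\cap C_2^{2^{k-1}}$ is normal in all of $B_k$ because conjugation merely permutes the $2^{k-1}$ last-level coordinates, preserving weight. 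Then $\tilde{B}_{k-1}\cap W_{k-1}=\langle e\rangle$ and $|\tilde{B}_{k-1}|\cdot|W_{k-1}|=2^{2^{k-1}-1}\cdot 2^{2^{k-1}-1}=2^{2^k-2}=|G_k|$, which yields $G_k=\tilde{B}_{k-1}\ltimes W_{k-1}$ directly, with no induction needed for this step.
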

The prove of this theorem is in \cite{SkAr}.

An even easier Proposition, that needs no proof, is the following.
\begin{proposition}\label{B_k_criteria}
An element $(g_1, g_2)\sigma^i$, $i \in \{0, 1\} $ of wreath power $\stackrel{k}{ \underset{\text{\it i=1}}{\wr } }C_2  $ belongs to its subgroup $ G_k$, iff $g_1g_2 \in G_{k-1}$.
\end{proposition}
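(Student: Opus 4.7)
The statement is literally the recursive definition of $G_k$ given in Section~2 (with $\sigma^i$ for $i\in\{0,1\}$ ranging over $C_2$ in place of the abstract symbol $\pi$), so on that reading the proof is a single line. The more informative content, and the one worth writing down, is that this recursive definition agrees with the description of $G_k$ coming from Theorem~\ref{max} as the maximal $2$-subgroup of $\mathrm{Aut}\,X^{[k]}$ acting by even permutations on the leaf set $X^k$.

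To derive the proposition from that viewpoint, I would decompose $X^k = X^{k-1}_0 \sqcup X^{k-1}_1$ into the two subtrees hanging below level~$1$. The automorphism $(g_1,g_2)\sigma^i$ acts on $X^k$ by letting $g_j$ permute $X^{k-1}_j$ independently and, if $i=1$, additionally swapping the two halves; the half-swap is a product of $2^{k-1}$ disjoint transpositions, hence an even permutation for every $k\geq 2$. Consequently the parity of $(g_1,g_2)\sigma^i$ on $X^k$ equals $\mathrm{sgn}(g_1)\cdot\mathrm{sgn}(g_2)$ in both cases $i\in\{0,1\}$, and by Theorem~\ref{max} membership in $G_k$ is equivalent to this parity being $+1$.

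I would then close by induction on $k$. The base $k=2$ is direct: $G_1=\{e\}$ forces $g_1 g_2 = e$, picking out the order-$4$ Klein subgroup of $B_2$, which is $Syl_2 A_4$. For the inductive step, the hypothesis at level $k-1$ gives $g_1 g_2 \in G_{k-1}$ iff $\mathrm{sgn}(g_1)\cdot\mathrm{sgn}(g_2)=+1$, which by the previous paragraph is equivalent to $(g_1,g_2)\sigma^i\in G_k$.

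The only real step is the evenness of the half-swap for $k\geq 2$; once that is noted, $\sigma^i$ contributes nothing to the leaf-parity count and the condition on $(g_1,g_2)\sigma^i$ collapses to a condition on $g_1 g_2$ alone. This is consistent with the author's remark that the proposition needs no proof.
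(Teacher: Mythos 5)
Your proof is correct and takes essentially the same route as the paper: both reconcile the recursive definition of $G_k$ with the even-action characterization from Theorem~\ref{max} by a parity count on the leaf level $X^k$, observing that the root swap and all vertex permutations above the last level act evenly, so that membership reduces to the parity condition carried by $g_1g_2$. Your write-up merely makes explicit what the paper's terse proof leaves implicit, namely the evenness of the half-swap (a product of $2^{k-1}$ transpositions for $k\geq 2$), the multiplicativity $\mathrm{sgn}(g_1)\mathrm{sgn}(g_2)=\mathrm{sgn}(g_1g_2)$, and the induction on $k$ identifying $G_{k-1}$ with the even subgroup of $B_{k-1}$.
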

\begin{proof}
This fact follows from the structure of elements of $G_k$ described Theorem \ref{max} in and the construction of wreath recursion. Indeed, due to  structure of elements of $G_k$ described in Theorem \label{max}, we have action on $X^k$ by an even permutations because subgroup $W_{k-1}$, containing even number of transposition, acts on $X^k$ only by even permutation. The condition $g_1g_2 \in G_{k-1}$ is equivalent to index of $g$ on $X^{k-1}$ is even but this condition equivalent to condition that $g$ acting on $X^k$ by even permutation.
\end{proof}

\begin{lemma}\label{L_k_comm_criteria}
An element $(g_1, g_2)\sigma^i \in G_k'$ iff $g_1,g_2 \in G_{k-1}$ and $g_1g_2\in B_{k-1}'$.
\end{lemma}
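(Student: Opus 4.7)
The plan is to prove necessity and sufficiency separately, exploiting the tower $G_k \subseteq B_k$ together with Lemma \ref{form of comm}, Corollary \ref{c_2_wr_b_elem_repr}, and Proposition \ref{B_k_criteria}. For the necessity direction I would start from $G_k' \subseteq B_k'$: applying Lemma \ref{form of comm} to $B_k = B_{k-1}\wr C_2$ already forces the permutation part to be trivial (so $i = 0$) and gives $g_1 g_2 \in B_{k-1}'$. To upgrade this to $g_1, g_2 \in G_{k-1}$ individually, I would introduce a parity homomorphism $\phi : G_k \to C_2$ sending $(g_1, g_2)\sigma^i$ to the sign of $g_1$ viewed as a permutation of $X^{k-1}$. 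The nontrivial point here is that $\phi$ really is a homomorphism on $G_k$ though not on all of $B_k$; it relies on the defining condition $s(v_1) = s(v_2)$ for $v \in G_k$ (Proposition \ref{B_k_criteria}) to cancel the twist produced when multiplication shuffles coordinates via a nontrivial $\sigma$-component. Since $C_2$ is abelian, $G_k' \subseteq \ker \phi$, which gives $g_1 \in G_{k-1}$; the membership condition $g_1 g_2 \in G_{k-1}$ from Proposition \ref{B_k_criteria} then forces $g_2 \in G_{k-1}$ as well.

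For sufficiency, assume $g_1, g_2 \in G_{k-1}$ and $g_1 g_2 \in B_{k-1}'$. I would decompose $(g_1, g_2) = (g_1, g_1^{-1}) \cdot (e, g_1 g_2)$ and show each factor belongs to $G_k'$. A direct computation yields $[(e,e)\sigma, (g_1, e)] = (g_1^{-1}, g_1)$, and both arguments lie in $G_k$ by Proposition \ref{B_k_criteria} (using $g_1 \in G_{k-1}$), so the inverse $(g_1, g_1^{-1})$ is also in $G_k'$. For the second factor $(e, h)$ with $h = g_1 g_2 \in B_{k-1}'$, I would rely on the elementary fact $B_{k-1}' = [B_{k-1}, G_{k-1}]$: for any commutator $[a, b]$ in $B_{k-1}$, at least one of $a$, $b$, $ab$ lies in the index-$2$ subgroup $G_{k-1}$, and the identity $[a, b] = [ab, b]$ then rewrites $[a, b]$ with a $G_{k-1}$-element in the first slot. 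Writing $h = \prod_j [f_j, g_j]$ with each $g_j \in G_{k-1}$, the product distributes in the base as $(e, h) = \prod_j (e, [f_j, g_j])$, and Corollary \ref{c_2_wr_b_elem_repr} applied with $r_1 = e$ represents each factor as the single commutator $[(e, g_j^{f_j})\sigma, (f_j^{-1}, f_j^{-1})]$. The hypothesis $g_j \in G_{k-1}$, together with the fact that conjugation preserves parity, puts $(e, g_j^{f_j})\sigma$ in $G_k$; the second argument $(f_j^{-1}, f_j^{-1})$ is in $G_k$ unconditionally since $f_j^{-2}$ is even. Each factor therefore lies in $G_k'$, and so does the product.

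The hardest step will be aligning the generators of $B_{k-1}'$ with the parity constraint imposed by Corollary \ref{c_2_wr_b_elem_repr}: one needs the $G_{k-1}$-element in the \emph{correct} slot of each representation $h = [f, g]$, which is exactly what ensures that the lifted commutator has both arguments in $G_k$ rather than only in $B_k$. The identity $B_{k-1}' = [B_{k-1}, G_{k-1}]$, while elementary, is the hinge of the sufficiency argument; everything else is routine bookkeeping with wreath recursions.
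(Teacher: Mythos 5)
Your proof is correct, but it takes a genuinely different route from the paper's. The paper proves both directions by pure index-parity bookkeeping: it invokes the characterization of $G_k'$ by even indexes on the levels $X^l$, $l<k-1$, and on $X^{k-2}$ of the two maximal subtrees (Lemma~\ref{comm}), together with the analogous even-index characterization of $B_{k-1}'$ (Lemma~\ref{comm B_k old}), and simply reads off that evenness of the relevant indexes of $(g_1,g_2)\sigma^i$ is equivalent to $g_1,g_2\in G_{k-1}$ and $g_1g_2\in B_{k-1}'$. You avoid the index calculus entirely. Your necessity argument goes through $G_k'\leq B_k'$ and Lemma~\ref{form of comm}, then through the parity map $\phi\colon G_k\to C_2$, $(g_1,g_2)\sigma^i\mapsto \mathrm{sgn}(g_1)$; your observation that $\phi$ is multiplicative on $G_k$ (but not on $B_k$) precisely because $h_1h_2\in G_{k-1}$ forces $\mathrm{sgn}(h_1)=\mathrm{sgn}(h_2)$ is correct and is the genuinely new idea here. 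Your sufficiency is constructive: the splitting $(g_1,g_2)=(g_1,g_1^{-1})\cdot(e,g_1g_2)$, the explicit commutator $[(e,e)\sigma,(g_1,e)]=(g_1^{-1},g_1)$, and Corollary~\ref{c_2_wr_b_elem_repr} with $r_1=e$ for the factors $(e,[f_j,g_j])$. This buys two things over the paper: your argument is self-contained and checkable step by step (the paper's proof cites Lemma~\ref{comm}, which is stated and proved only \emph{later} in the text by an informal counting argument), and your auxiliary fact that every element of $B_{k-1}'$ is a product of commutators with one entry in $G_{k-1}$ anticipates, in weaker form, the paper's Theorem~\ref{_comm_F_k_eq_[L_k,F_k]}, which is proved independently of this lemma, so no circularity arises. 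What the paper's approach buys is brevity: one parity criterion settles both directions at once.

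Two small repairs are needed. First, a slot mix-up: with $r_1=e$, Corollary~\ref{c_2_wr_b_elem_repr} requires the \emph{second} entry $g_j$ of $[f_j,g_j]$ to lie in $G_{k-1}$, so that $a_{1,2}=g_j^{f_j}\in G_{k-1}$ by normality (Proposition~\ref{G_k_is_normal_in_B_k}); yet your index-$2$ trick as written places the $G_{k-1}$-element ``in the first slot'', and the identity $[a,b]=[ab,b]$ alone does not handle the case $a\in G_{k-1}$, $b\notin G_{k-1}$. The fix is elementary: use $[a,b]=[b,a]^{-1}$ (inverting within $G_k'$ is harmless) or $[a,b]=[b^{-1},a^{b}]$, where $a^{b}\in G_{k-1}$ again by normality. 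Second, both your argument and the statement itself implicitly force $i=0$: as you note, $G_k'\leq B_k'$ kills the top permutation, so the ``iff'' must be read with $\sigma^i$ trivial, and indeed your sufficiency construction produces only elements with trivial top permutation, as it must.
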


\begin{proof}
Indeed, if $(g_1, g_2) \in G_k'$ then indexes of $g_1$ and $g_2$ on $X^{k-1}$ are even according to Lemma \ref{comm} thus, $g_1,g_2 \in G_{k-1}$. A sum of indexes of $g_1$ and $g_2$ on $X^{l}$, $l<k-1$ are even according to Lemma \ref{comm} too, so index of product $g_1 g_2$ on $X^{l}$ is even. Thus, $g_1g_2\in B_{k-1}'$. Hence, necessity is proved.

Let us prove the sufficiency via Lemma \ref{comm}.
Wise versa, if $g_1,g_2 \in G_{k-1}$ then indexes of these automorphisms on $X^{k-2}$ of subtrees $v_{11}X^{[k-1]}$ and $v_{12}X^{[k-1]}$ are even as elements from $ G_k'$ have.  
 The product $g_1g_2$ belongs to $B_{k-1}'$ by condition of this Lemma so sum of indexes of $g_1, g_2$ on any level $X^l$,  $0 \leq l<k-1 $ is even. Thus, the characteristic properties of $G_k'$ described in Lemma \ref{comm} holds.
\end{proof}

\begin{proposition}\label{comm_F_k_is_subgroup_of_L_k}
The following inclusion $B_k'<G_k$ holds.
\end{proposition}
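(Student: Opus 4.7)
The plan is to combine Lemma \ref{comm B_k old} with Theorem \ref{max}. By Lemma \ref{comm B_k old}, every $g \in B_k'$ has even index $In_l(g)$ on every level $X^l$ for $0 \leq l < k$; in particular $In_{k-1}(g)$ is even.

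The substantive step is to observe that having even index on the bottom level $X^{k-1}$ is equivalent to acting on the leaf set $X^k$ by an even permutation. The action of $g \in B_k$ on $X^k$ decomposes as a product of swaps of pairs of sibling subtrees at each vertex carrying an active v.p. An active v.p. at a vertex of level $l < k-1$ interchanges two sibling subtrees each containing $2^{k-1-l}$ leaves, so it acts on $X^k$ as $2^{k-1-l}$ disjoint transpositions, which for $l<k-1$ is an even permutation. Only active v.p. at vertices of the bottom level $X^{k-1}$ contribute single transpositions of leaves. Hence the sign of $g$ acting on $X^k$ equals $(-1)^{In_{k-1}(g)}$.

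By Theorem \ref{max}, $G_k \simeq Syl_2 A_{2^k}$ is precisely the subgroup of $B_k$ acting on $X^k$ by even permutations. Combining the observations: any $g \in B_k'$ has $In_{k-1}(g)$ even, hence acts by an even permutation on $X^k$, hence lies in $G_k$, which proves $B_k' < G_k$.

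There is no real obstacle here; the result is an immediate corollary of the previously established characterisations. One may alternatively phrase the argument purely homomorphically: the restriction to $B_k$ of the sign map $\operatorname{sgn}\colon S_{2^k} \to \{\pm 1\}$ is a homomorphism into an abelian group and therefore annihilates $B_k'$, and its kernel on $B_k$ equals $B_k \cap A_{2^k} = G_k$.
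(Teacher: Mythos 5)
Your proof is correct, but it takes a genuinely different route from the paper. The paper's own argument is a one-liner through Theorem \ref{max}: it asserts $B_k' = \wr_{i=1}^{k-1} C_2 = B_{k-1}$ and then invokes the decomposition $G_k \simeq B_{k-1} \ltimes W_{k-1}$ to conclude $B_k' < G_k$. You instead argue by parity: from Lemma \ref{comm B_k old} every $g \in B_k'$ has even index on $X^{k-1}$, the sign of the leaf action is $(-1)^{In_{k-1}(g)}$ because an active v.p.\ at level $l$ contributes $2^{k-1-l}$ disjoint transpositions of leaves (even unless $l = k-1$), and $G_k$ is exactly the even-acting part of $B_k$ by Theorem \ref{max} (the paper itself states this equivalence inside the proof of Proposition \ref{B_k_criteria}). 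Your route is arguably the more solid one: the paper's identification $B_k' = B_{k-1}$ cannot be taken literally, since $|B_k'| = 2^{2^k-1-k}$ while $|B_{k-1}| = 2^{2^{k-1}-1}$, and these already differ at $k=3$ (the containment that does follow from the wreath structure is $B_k' \leq B_{k-1} \times B_{k-1}$, the first-level stabilizer), so the paper's proof rests on a loose identification that your argument avoids entirely. Your closing homomorphic rephrasing is the cleanest version of all: since $\operatorname{sgn}\colon S_{2^k} \to \{\pm 1\}$ has abelian image, $B_k' \leq \ker(\operatorname{sgn}|_{B_k}) = B_k \cap A_{2^k} = G_k$, and this does not even require Lemma \ref{comm B_k old} — only the fact that $G_k$ is the index-two even part of $B_k$, which also yields Proposition \ref{G_k_is_normal_in_B_k} for free.
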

 \begin{proof}
 Indeed,  $B_k' = \wr_{i=1}^{k-1} C_2 = B_{k-1}$ and as we define $G_k \simeq  B_{k-1} \ltimes W_{k-1} $ so $B_k' < G_k$.
 \end{proof}
\begin{proposition}\label{G_k_is_normal_in_B_k}
The group $G_k$ is normal in wreath product $\stackrel{k}{ \underset{\text{\it i=1}}{\wr } }C_2$ i.e. $ G_k \lhd B_k$.
\end{proposition}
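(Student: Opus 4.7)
The plan is to use the cleanest available tool: $G_k$ sits as an index $2$ subgroup of $B_k$, which forces normality with no further work. This index is already recorded in the excerpt just before Theorem~\ref{max}, where it is stated that $G_k$ is a proper subgroup of index $2$ in $\stackrel{k}{\underset{\text{\it i=1}}{\wr}}C_2$ (referring to \cite{SkAr, Dm, Sk}). So the main step is just to invoke the elementary fact that every subgroup of index $2$ is normal: if $b \in B_k \setminus G_k$, then the left coset $bG_k$ and the right coset $G_k b$ both equal the complement $B_k \setminus G_k$, hence they coincide, and similarly for $b \in G_k$ where both cosets equal $G_k$.

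If one prefers a direct verification avoiding the index argument, the alternative plan is to use Proposition~\ref{B_k_criteria} as the defining characterization: $(g_1,g_2)\sigma^i \in G_k$ iff $g_1 g_2 \in G_{k-1}$. Take an arbitrary $b = (b_1,b_2)\sigma^j \in B_k$ and $g = (g_1,g_2)\sigma^i \in G_k$, compute $bgb^{-1}$ using the wreath recursion multiplication rule, and read off its two first-level coordinates $(g_1', g_2')$; then verify that $g_1' g_2'$ is conjugate in $B_{k-1}$ to $g_1 g_2$ (the two cases $j=0$ and $j=1$ give either $g_1' g_2' = b_1 g_1 g_2 b_1^{-1}$, up to a swap, or $g_1' g_2' = b_1 g_2 g_1 b_1^{-1}$ type expressions). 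Since $g_1 g_2 \in G_{k-1}$ by assumption and $G_{k-1} \lhd B_{k-1}$ by the inductive hypothesis, the product $g_1' g_2'$ still lies in $G_{k-1}$, so $bgb^{-1} \in G_k$.

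The base case $k=1$ is trivial since $G_1 = \langle e \rangle$. The only mild obstacle in the direct approach is keeping track of the coordinate swap introduced by $\sigma$; but since we only need $g_1' g_2' \in G_{k-1}$ and multiplication in $B_{k-1}$ is available, the swap merely reorders factors inside a conjugate, which is harmless once one knows $G_{k-1}$ is normal. Either way the proposition is a one-liner, and I would present the index-$2$ argument as the main proof and mention the direct computation only as a remark.
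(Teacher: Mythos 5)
Your main argument is correct but follows a genuinely different route from the paper. The paper's proof never mentions the index: it shows that the derived subgroup $B_k'$ is contained in $G_k$ (via $B_k' < B_{k-1}$ and the decomposition $G_k \simeq B_{k-1} \ltimes W_{k-1}$ of Theorem~\ref{max}, i.e.\ Proposition~\ref{comm_F_k_is_subgroup_of_L_k}), and then invokes the standard fact that any subgroup containing the commutator subgroup is normal, since it corresponds to a subgroup of the abelian quotient $B_k/B_k'$. Your index-$2$ argument is even more elementary and self-contained once the fact $\left| B_k : G_k \right| = 2$ is granted --- and the paper does state this fact (citing its reference [SkAr]) both before Theorem~\ref{max} and in the remark immediately following this very proposition, where it essentially repeats your argument. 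What each approach buys: yours needs only the index computation and nothing about commutators; the paper's needs no index count but does need Theorem~\ref{max} and Proposition~\ref{comm_F_k_is_subgroup_of_L_k}, and as a by-product records the stronger containment $B_k' < G_k$, which is then reused later (e.g.\ in Lemma~\ref{L_k_comm_criteria} and Proposition~\ref{g_sq_in_G_k}).

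One caveat about your alternative direct computation: the claimed form of $g_1' g_2'$ is not right in all four cases. The clean cancellation to $b_1 g_1 g_2 b_1^{-1}$ (or $b_1 g_2 g_1 b_1^{-1}$) occurs only when $g$ has an active root, i.e.\ $i=1$. When $i=0$ the two coordinates of $bgb^{-1}$ are conjugated by \emph{different} elements, e.g.\ for $j=0$ one gets $g_1' g_2' = b_1 g_1 b_1^{-1} b_2 g_2 b_2^{-1} = \left(g_1 \, g_2^{\,c}\right)^{b_1}$ with $c = b_1^{-1} b_2$, which is not a plain conjugate of $g_1 g_2$. The gap is repairable: writing $g_1 g_2^{\,c} = (g_1 g_2)\,[g_2^{-1}, c]$ and using $B_{k-1}' < G_{k-1}$ (Proposition~\ref{comm_F_k_is_subgroup_of_L_k}) together with the inductive normality of $G_{k-1}$ yields $g_1' g_2' \in G_{k-1}$, so Proposition~\ref{B_k_criteria} applies; but as sketched, "the swap merely reorders factors inside a conjugate" understates what is needed. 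Since you present this only as a remark and your primary index-$2$ proof is complete, the proposal as a whole stands.
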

\begin{proof}
The commutator of $B_k$ is $B_k' < B_{k-1}$. In other hand $ B_{k-1} <  G_{k}$ because $G_k \simeq  B_{k-1} \ltimes W_{k-1} $ consequently $B_k' < G_{k}$. Thus, $G_{k} \lhd B_{k}$.
\end{proof}
There exists a normal embedding  (normal  injective monomorphism)  $\varphi :\,\,{{G}_{k}}\to {{B}_{k}}$  \cite{Heinek} i.e.$~~{{G}_{k}}\triangleleft {{B}_{k}}$. Actually, it implies from  Proposition \ref{G_k_is_normal_in_B_k}. Also according to \cite{SkAr} index $\left| {{B}_{k}}~ :~{{G}_{k}} \right|=2$ so ${G}_k$ is a normal subgroup that is a factor subgroup $~~{}^{{{B}_{k}}}/{}_{{{C}_{2}}}\simeq {{G}_{k}}$.

\begin{theorem}\label{_comm_F_k_eq_[L_k,F_k]}
Elements of $\aut[k]'$ have the following form $\aut[k]'=\{[f,l]\mid f\in B_k, l\in G_k\}=\{[l,f]\mid f\in B_k, l\in G_k\}$.
\end{theorem}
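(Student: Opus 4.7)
The plan is to prove the substantive inclusion $B_k'\subseteq\{[l,f]\mid f\in B_k,\ l\in G_k\}$; the reverse inclusion is immediate since any commutator belongs to the derived subgroup, and the second equality of the theorem follows from $[l,f]=[f,l]\m$ together with $B_k'$ being closed under inversion. My approach is induction on $k$, and the key technical tool is Corollary \ref{c_2_wr_b_elem_repr}.

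For the base case $k=1$, $B_1=C_2$ is abelian and $B_1'=\{e\}$, so there is nothing to show. For the inductive step, given $w\in B_k'$, Lemma \ref{form of comm} with $p=2$ writes $w=(r_1,\ r_1\m x)$ for some $r_1\in B_{k-1}$ and $x\in B_{k-1}'$. I then apply the \emph{inductive hypothesis} in its strengthened form: $x=[f,g]$ for some $f\in B_{k-1}$ and $g\in G_{k-1}$ (for $k=2$ this is just $x=e=[e,e]$ with $e\in G_1$, while for $k\geq 3$ it is the statement of the theorem at level $k-1$). Corollary \ref{c_2_wr_b_elem_repr} then produces $w=[\alpha,\beta]$ with $\alpha=(e,a_{1,2})\sigma$, $\beta=(a_{2,1},a_{2,2})$, and crucially $a_{1,2}=g^{a_{2,2}\m}$. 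Since $G_{k-1}\lhd B_{k-1}$ by Proposition \ref{G_k_is_normal_in_B_k} applied at the previous level, conjugation of $g$ by $a_{2,2}\m$ keeps the result in $G_{k-1}$, so $a_{1,2}\in G_{k-1}$; hence by Proposition \ref{B_k_criteria} we have $\alpha\in G_k$ because $e\cdot a_{1,2}\in G_{k-1}$. This exhibits $w$ as $[\alpha,\beta]$ with $\alpha\in G_k$ and $\beta\in B_k$, completing the induction.

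The main obstacle is that a naive induction using only $cw(B_{k-1})=1$ would give $x=[f,g]$ with arbitrary $f,g\in B_{k-1}$, and the resulting $a_{1,2}=g^{a_{2,2}\m}$ would not in general lie in $G_{k-1}$, so $\alpha$ need not belong to $G_k$ (and a parallel direct check shows $\beta$ typically fails as well). It is therefore essential to prove the statement in the strengthened form right from the base case so that the induction transports the constraint $g\in G_{k-1}$ forward one level at a time. The normality $G_{k-1}\lhd B_{k-1}$ is then exactly what is needed to survive the conjugation that Corollary \ref{c_2_wr_b_elem_repr} introduces when encoding $x$ back into the commutator expressing $w$.
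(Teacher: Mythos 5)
Your proof is correct and takes essentially the same route as the paper's: induction on $k$ with the strengthened hypothesis $x=[f,g]$, $f\in B_{k-1}$, $g\in G_{k-1}$, the decomposition $w=(r_1,\ r_1\m x)$ from Lemma~\ref{form of comm_2}, and Corollary~\ref{c_2_wr_b_elem_repr} to realize $w$ as a single commutator whose first factor $(e,a_{1,2})\sigma$ lies in $G_k$ by Proposition~\ref{B_k_criteria}. The only difference is that you spell out the appeal to Proposition~\ref{G_k_is_normal_in_B_k} to justify $a_{1,2}=g^{a_{2,2}\m}\in G_{k-1}$, a step the paper uses but leaves implicit.
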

\begin{proof}
It is enough to show either $B_k'=\{[f,l]\mid f\in B_k, l\in G_k\}$ or $B_k'=\{[l,f]\mid f\in B_k, l\in G_k\}$ because if $f = [g,h]$ then $f\m = [h,g]$.

We prove the proposition by induction on $k$.
$B_1' = \langle e \rangle$

We already know \ref{form of comm_2} that every element $w\in\aut[k]'$ we can represent as
\begin{align*}
w=(r_1, r_1\m [f,g])
\end{align*}
for some $r_1,f\in \aut[k-1]$ and $ g\in \syl[k-1]$ (by induction hypothesis). By the Corollary~\ref{c_2_wr_b_elem_repr} we can represent $w$ as commutator of
\begin{align*}
(e,a_{1,2})\sigma \in \aut[k] \mbox{ and } (a_{2,1}, a_{2,2}) \in \aut[k],
\end{align*}
where
\begin{align*}
a_{2,1} &= (f\m)^{r_1\m},\\
a_{2,2} &= r_{1} a_{2,1},\\
a_{1,2} &= g^{a_{2,2}\m}.
\end{align*}
We note that as $g \in G_{k-1}$ then by proposition~\ref{B_k_criteria} we have $(e,a_{1,2})\sigma \in \syl[k]$.
\end{proof}
Directly from this Proposition follows next Corollary, that needs no proof.
\begin{remark}\label{_comm_B_k_eq_[b_k,b_k]}
Let us to note that Theorem~\ref{_comm_F_k_eq_[L_k,F_k]} improve Corollary~\ref{cw_syl_p_s_p_k_eq_1_and_syl_p_a_p_k_eq_1} for the case $p=2$.
\end{remark}


\begin{proposition}\label{B'_k and B^2_k}
If $g $ is an element of wreath power $\stackrel{k}{ \underset{\text{\it i=1}}{\wr } }C_2 \simeq B_k $ then $g^2 \in B'_{k}$.
\end{proposition}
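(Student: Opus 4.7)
The plan is to proceed by induction on $k$, using Lemma~\ref{form of comm} (with $p=2$, $B = B_{k-1}$) as the criterion for membership in $B_k'$: a wreath recursion $(r_1, r_2)$ lies in $B_k'$ if and only if $r_1 r_2 \in B_{k-1}'$. The base case $k=1$ is immediate, since $B_1 = C_2$ and $g^2 = e \in B_1' = \langle e \rangle$ for every $g \in B_1$.

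For the inductive step, I would write an arbitrary $g \in B_k = B_{k-1} \wr C_2$ as a wreath recursion $g = (g_1, g_2)\sigma^i$ with $g_1, g_2 \in B_{k-1}$ and $i \in \{0, 1\}$, and split on the value of $i$. When $i = 0$, the multiplication rule gives $g^2 = (g_1^2,\, g_2^2)$; by the inductive hypothesis $g_1^2, g_2^2 \in B_{k-1}'$, and since $B_{k-1}'$ is closed under products we have $g_1^2 g_2^2 \in B_{k-1}'$. Lemma~\ref{form of comm} then yields $g^2 \in B_k'$.

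When $i = 1$, the wreath recursion multiplication rule gives
\[
g^2 = (g_1, g_2)\sigma \cdot (g_1, g_2)\sigma = (g_1 g_2,\, g_2 g_1),
\]
so the product of the two coordinate entries is $g_1 g_2 \cdot g_2 g_1 = g_1 g_2^2 g_1$. The only point requiring a small amount of care is to verify $g_1 g_2^2 g_1 \in B_{k-1}'$. For this I would use that $B_{k-1}'$ is normal in $B_{k-1}$ and that $B_{k-1}/B_{k-1}'$ is abelian, so modulo $B_{k-1}'$ one has $g_1 g_2^2 g_1 \equiv g_1^2 g_2^2$; by the inductive hypothesis applied to $g_1$ and $g_2$, both $g_1^2$ and $g_2^2$ lie in $B_{k-1}'$, hence so does $g_1 g_2^2 g_1$. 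Applying Lemma~\ref{form of comm} a second time concludes $g^2 \in B_k'$.

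There is no real obstacle here: the whole argument is a routine unpacking of the wreath product multiplication rule together with the inductive hypothesis, and the only mildly non-obvious manoeuvre is the rearrangement $g_1 g_2^2 g_1 \equiv g_1^2 g_2^2 \pmod{B_{k-1}'}$, which is just commutativity in the abelianization. An alternative, slightly slicker reformulation would be to observe that the above computation shows that $B_k^{\mathrm{ab}}$ has exponent $2$ provided $B_{k-1}^{\mathrm{ab}}$ does, which is exactly the inductive statement $g^2 \in B_k'$ for all $g$.
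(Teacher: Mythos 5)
Your proof is correct, but it takes a genuinely different route from the paper. The paper proves Proposition~\ref{B'_k and B^2_k} by an index-parity argument: it invokes Lemma~\ref{comm B_k old} (membership in $B_k'$ is equivalent to having even index on every level $X^l$, $0\leq l<k$) and then checks, by expanding $\alpha^2=(h_1,\ldots,h_{2^l})\pi_1\cdot(h_1,\ldots,h_{2^l})\pi_1$ as a wreath recursion, that the index of a square on each level is a sum of two equal-parity contributions and hence even; the conclusion then follows from the level-by-level criterion. You instead run an induction on $k$ using only the $p=2$ case of the coordinate-product criterion of Lemma~\ref{form of comm}: writing $g=(g_1,g_2)\sigma^i$, you get $g^2=(g_1^2,g_2^2)$ or $g^2=(g_1g_2,\,g_2g_1)$, and in the second case the key rearrangement $g_1g_2^2g_1\equiv g_1^2g_2^2 \pmod{B_{k-1}'}$ in the abelianization, combined with the inductive hypothesis, closes the argument. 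Your route is more self-contained and purely algebraic, bypassing the vertex-labelled tree and index machinery entirely; it also makes the statement transparently equivalent to ``$B_k^{\mathrm{ab}}$ has exponent $2$,'' which the paper never states explicitly. It is worth noting that your case split $g^2=(g_1^2,g_2^2)$ versus $g^2=(g_1g_2,\,g_2g_1)$ is exactly the structure the paper itself uses later, in the proof of Proposition~\ref{g_sq_in_G_k} for the subgroup $G_k$ — so in effect you have given the $B_k$-analogue of that later argument, whereas the paper settles the $B_k$ case by the earlier parity technique. One small point of care, which your write-up implicitly handles correctly: Lemma~\ref{form of comm} is stated for elements with trivial top permutation, and this applies to $g^2$ in both of your cases since $\sigma^2=e$.
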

\begin{proof}
As it was proved in Lemma \ref{comm B_k old} commutator $[\alpha, \beta]$ from $B_k$ has arbitrary even indexes on $X^m$, $m<k$. Let us show that elements of $B_k^2$ have the same structure.

Let $\alpha, \beta \in B_k$ an indexes of the automorphisms $\alpha^2 $, $(\alpha \beta)^2 $  on $X^l, \, l<k-1$ are always even. In more detail the indexes of $\alpha^2 $, $(\alpha \beta)^2 $ and $\alpha^{-2} $ on $X^l$ are determined exceptionally by the parity of indexes of $\alpha $ and $\beta $ on ${{X}^{l}}$. Actually, the parity of this index are formed independently of the action of
$Aut X^l$ on $X^l$.
So this index forms as a result of multiplying of elements $\alpha \in B_k$ presented as wreath recursion $ \alpha^2 = (h_{1},...,h_{2^l})\pi_1 \cdot (h_{1},...,h_{2^l })\pi_1= (h_{1},...,h_{2^l}) (h_{\pi_1(1)},...,h_{\pi_1(2^l)})\pi_1 ^2 $, where $h_{i}, h_{j} \in {B}_{k-l}, \, \pi_1 \in B_l$, $l<k$ and besides automorphisms corresponding to $h_{i}$ are $x$ automorphisms which has active v.p. on $X^l$. Analogous automorphisms $h_{i}$ has number of active v.p. equal to $x$. As a result of multiplication we have automorphism with index $2i:$ $0 \leq 2i \leq 2x$.

Since $g^2 $ admits only an even index on $X^l$ of $Aut X^{[k]}$, $0<l<k$, then $g^2 \in B'_{k}$ according to lemma \ref{comm B_k old} about structure of a commutator subgroup.
\end{proof}

Since as well known a group $G_k^2$ contains the subgroup $G'$ then a product $G^2 G'$ contains all elements from the commutant. Therefore, we obtain that $G_k^2 \simeq G'_k$.

\begin{proposition}\label{g_sq_in_G_k}
For arbitrary $g\in G_k$ following inclusion $g^2\in G_k'$ holds.
\end{proposition}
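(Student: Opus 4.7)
The plan is to verify the criterion of Lemma~\ref{L_k_comm_criteria} for $g^2$. Writing $g = (g_1, g_2)\sigma^i$ in the wreath decomposition $B_k = B_{k-1} \wr C_2$ with $i \in \{0,1\}$, Proposition~\ref{B_k_criteria} already gives $g_1 g_2 \in G_{k-1}$ since $g \in G_k$. A direct application of the wreath recursion multiplication rule yields
\begin{align*}
g^2 = \begin{cases}(g_1^2,\; g_2^2), & i = 0,\\ (g_1 g_2,\; g_2 g_1), & i = 1.\end{cases}
\end{align*}

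For the condition on the product of coordinates the argument is uniform in $i$: Proposition~\ref{B'_k and B^2_k} gives $g^2 \in B_k' = (B_{k-1} \wr C_2)'$, and Lemma~\ref{form of comm} then forces the product of the two first-level sections of $g^2$ into $B_{k-1}'$, which is precisely the second requirement of Lemma~\ref{L_k_comm_criteria}.

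It remains to check that each individual section of $g^2$ lies in $G_{k-1}$. In the case $i = 0$ this is immediate from Proposition~\ref{B'_k and B^2_k} applied inside $B_{k-1}$ (so that $g_1^2, g_2^2 \in B_{k-1}'$) combined with the inclusion $B_{k-1}' < G_{k-1}$ provided by Proposition~\ref{comm_F_k_is_subgroup_of_L_k}. In the case $i = 1$ we already know $g_1 g_2 \in G_{k-1}$, and the identity $g_2 g_1 = g_1^{-1}(g_1 g_2) g_1$ together with the normality $G_{k-1} \lhd B_{k-1}$ from Proposition~\ref{G_k_is_normal_in_B_k} gives $g_2 g_1 \in G_{k-1}$ as a conjugate of an element of $G_{k-1}$ by an element of $B_{k-1}$. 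Combining both checks with Lemma~\ref{L_k_comm_criteria} yields $g^2 \in G_k'$.

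The only mildly delicate point is the case $i = 1$: in general neither $g_1$ nor $g_2$ need lie in $G_{k-1}$, so concluding $g_2 g_1 \in G_{k-1}$ genuinely requires that $G_{k-1}$ be \emph{normal} in $B_{k-1}$ and not merely a subgroup. Everything else is routine assembly of the already-established structural lemmas.
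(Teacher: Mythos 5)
Your proof is correct, and while it follows the paper's skeleton --- the same case split $g^2=(g_1^2,\,g_2^2)$ versus $(g_1g_2,\,g_2g_1)$ and the same target criterion, Lemma~\ref{L_k_comm_criteria} --- your handling of both verification steps differs from the paper's in ways worth noting. For the product-of-sections condition the paper argues case by case: for $i=0$ it multiplies $g_1^2 g_2^2 \in B_{k-1}'$ directly, and for $i=1$ it uses the identity $g_1g_2\cdot g_2g_1 = g_1^2 g_2^2[g_2^{-2},g_1^{-1}]$, invoking Proposition~\ref{B'_k and B^2_k} at level $k-1$; you instead apply Proposition~\ref{B'_k and B^2_k} once at level $k$ to get $g^2\in B_k'$ and let Lemma~\ref{form of comm} (with $B=B_{k-1}$, $p=2$) force the product of the two sections into $B_{k-1}'$ uniformly in $i$, which replaces the paper's ad hoc commutator manipulations with a single appeal to machinery already established. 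For $g_2g_1\in G_{k-1}$ the paper factors $g_2g_1=g_1g_2[g_2^{-1},g_1^{-1}]$ and uses $B_{k-1}'<G_{k-1}$ via Proposition~\ref{comm_F_k_is_subgroup_of_L_k}, while you conjugate, $g_2g_1=g_1^{-1}(g_1g_2)g_1$, and invoke normality $G_{k-1}\lhd B_{k-1}$ from Proposition~\ref{G_k_is_normal_in_B_k}; the two are equivalent in strength, and your closing caveat --- that a mere subgroup would not suffice since $g_1,g_2$ themselves need not lie in $G_{k-1}$ --- is exactly the right point to flag. Finally, the paper frames its proof as an induction on $k$ with base case $G_1$, but the inductive hypothesis does no essential work there (everything reduces to Proposition~\ref{B'_k and B^2_k} and the inclusion $B_{k-1}'<G_{k-1}$); your version makes this explicit by dispensing with induction altogether, yielding a self-contained two-case verification with no circularity, since none of the results you cite depends on this proposition.
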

\begin{proof}
Induction on $k$: for $G^2_1$ elements has form $ ((e, e)\sigma)^2 = e $ where $\sigma = (1,2)$ so statement holds. In general case when $k>1$ elements of $G_k$ has form
\begin{align*}
g = (g_1, g_2) \sigma^i,  \, g_1 \in B_{k-1}, \, i \in{0.1} \\
\mbox{ then we have two possibilities} \\
g^2 = (g_1^2, g_2^2)\mbox{ or } g^2 = (g_1g_2, g_2g_1).
\end{align*}

We first show that
\begin{align*}
g_1^2 \in B_{k-1}',
g_2^2 \in B_{k-1}'\\
\mbox{ after we will prove} \\
g_1 g_2 \cdot g_2 g_1 \in B_{k-1}',
\end{align*}
 actually, according to Proposition 14 $g_1^2, g_2^2\in B_{k-1}'$ then $g_1^2 g_2^2 \in B_{k-1}'$ and $ g_1^2, g_2^2 \in G_{k-1}$ by Proposition \ref{comm_F_k_is_subgroup_of_L_k} also $ g_1^2, g_2^2 \in G_{k-1}$ by induction assumption. From Proposition~\ref{B_k_criteria} it follows that $g_1 g_2  \in G_{k-1}$.


Note that $B_{k-1}' < B_{k-2}$. In other hand $ B_{k-2} <  G_{k-1}$ because $G_{k-1} \simeq  B_{k-2} \ltimes W_{k-2} $ consequently $B_{k-1}' < G_{k-1}$. Besides we have $g_1^2 \in B_{k-1}'$ hence $g_1^2 \in G_{k-1}$.

 Thus, we can use Lemma~\ref{L_k_comm_criteria} (about $G'_k$) from which yields  $g^2 = (g_1^2, g_2^2) \in G_{k}'$.


Consider second case $g^2 = (g_1g_2, g_2g_1)$
\begin{align*}
g_1g_2\in G_{k-1}   \mbox{ by proposition~\ref{B_k_criteria}}\\
g_2g_1 = g_1g_2 g_2\m g_1\m g_2 g_1 = g_1g_2 [g_2\m, g_1\m]\in G_{k-1}\mbox{ by propositions~\ref{comm_F_k_is_subgroup_of_L_k} and~\ref{B_k_criteria}}\\
g_1g_2 \cdot g_2 g_1 = g_1 g_2^2 g_1 = g_1^2 g_2^2 [g_2^{-2}, g_1\m] \in B_{k-1}'
\end{align*}
Note that $g_1^2, g_2^2 \in B'_{k-1}$ according to Proposition \ref{B'_k and B^2_k} this is a reason why $g_1^2 g_2^2 [g_2^{-2}, g_1\m] \in B_{k-1}'$.
Thus, $( g_1g_2, g_1g_2) \in G'_{k}$ by Lemma~~\ref{L_k_comm_criteria}.
\end{proof}


  \end{section}

Let $X_1=\{v_{k-1,1}, v_{k-1,2},..., v_{k-1,2^{k-2}} \} $ and $X_2=\{v_{k-1,2^{k-2}+1}, ..., v_{k-1,2^{k-1}} \}$.

We will call a distance structure $\rho_l(\theta)$ of $\theta $ a tuple of distances between its active vertices from $X^l$.
Let group $Sy{{l}_{2}}{{A}_{{{2}^{k}}}}$ acts on $X^{[k]}$.
\begin{lemma} \label{comm}
An element $g$ belongs to $G_k' \simeq Syl_2{A_{2^k}}$ iff $g$ is arbitrary element from $G_k$ which has all even indexes  on ${{X}^{l}},\,\,l<k-1$ of ${{X}^{[k]}}$ and on ${{X}^{k-2}}$ of subtrees ${{v}_{11}}{{X}^{[k-1]}}$ and ${{v}_{12}}{{X}^{[k-1]}}$.



 \end{lemma}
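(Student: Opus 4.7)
The plan is to obtain this characterization by combining Lemma~\ref{L_k_comm_criteria} with Lemma~\ref{comm B_k old}, and translating the membership conditions on the sections $g_1, g_2$ and on their product $g_1 g_2$ into statements about indices of $g$ on various levels of $X^{[k]}$.

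First I would decompose $g = (g_1, g_2)\sigma^i$ with $g_1, g_2 \in B_{k-1}$ and apply Lemma~\ref{L_k_comm_criteria}: $g \in G_k'$ iff $g_1, g_2 \in G_{k-1}$ and $g_1 g_2 \in B_{k-1}'$, together with $i=0$ (which is automatic, since $G_k' \subseteq B_k'$ and any commutator in $B_{k-1}\wr C_2$ has trivial root v.p., as the top quotient $C_2$ is abelian). The condition $g_j \in G_{k-1}$ is by definition of $G_{k-1}$, together with Proposition~\ref{B_k_criteria} applied one level down, equivalent to $g_j$ acting as an even permutation on the leaves of its subtree, i.e.\ to having even index on the penultimate level $X^{k-2}$ of $X^{[k-1]}$; translated back to $X^{[k]}$ this is precisely even index on $X^{k-2}$ of $v_{11}X^{[k-1]}$ and of $v_{12}X^{[k-1]}$ respectively.

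To handle the condition $g_1 g_2 \in B_{k-1}'$ via Lemma~\ref{comm B_k old}, I would use the key observation that for each fixed level $X^l$ the parity of the index is a homomorphism $B_k\to\mathbb F_2$: a short induction on $l$ using the wreath recursion multiplication rule shows that the number of active v.p.s on $X^l$ of a product is, modulo $2$, the sum of those of the factors. Consequently, for $l\ge 1$ the parity of the index of $g=(g_1,g_2)$ on $X^l$ of $X^{[k]}$ equals the parity of the index of $g_1 g_2$ on $X^{l-1}$ of $X^{[k-1]}$. Under this correspondence, requiring $g_1 g_2 \in B_{k-1}'$ (even indices on $X^m$ for $0\le m\le k-2$) yields even indices of $g$ on $X^l$ for $1\le l\le k-1$; combined with $i=0$ (even index on $X^0$) this gives even indices of $g$ on all levels $0\le l\le k-1$. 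The evenness on the extremal level $X^{k-1}$ is however already implied by the two subtree evenness conditions at $X^{k-2}$ (even plus even is even), so it may be dropped from the statement, leaving exactly the criterion asserted: even indices on $X^l$ for $l<k-1$ of $X^{[k]}$, plus even indices on $X^{k-2}$ of $v_{11}X^{[k-1]}$ and $v_{12}X^{[k-1]}$.

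The principal delicacy will be the bookkeeping of level indices between $X^{[k]}$ and its two first-level subtrees, and checking that the split version of the evenness condition on $X^{k-1}$ — one copy for each subtree rather than one aggregate condition — is exactly what is needed: strong enough to recover $g_1,g_2\in G_{k-1}$ separately (not merely $g_1 g_2$ having even leaf-parity), and not so strong that it over-constrains elements of $G_k'$ beyond Lemma~\ref{L_k_comm_criteria}.
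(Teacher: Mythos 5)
Your reduction is mathematically careful — the parity-homomorphism bookkeeping is right, the observation that the split condition on $X^{k-2}$ of the two subtrees $v_{11}X^{[k-1]}$, $v_{12}X^{[k-1]}$ is strictly stronger than aggregate evenness on $X^{k-1}$ is exactly the right point to isolate, and noting that $i=0$ is forced because the top quotient $C_2$ is abelian patches a genuine sloppiness in the statement of Lemma~\ref{L_k_comm_criteria}. But the proposal is circular within this paper: Lemma~\ref{L_k_comm_criteria}, which you use as the engine, is itself proved in the paper by citing the present lemma — its necessity part reads ``indexes of $g_1$ and $g_2$ on $X^{k-1}$ are even according to Lemma~\ref{comm}'' and its sufficiency part opens with ``Let us prove the sufficiency via Lemma~\ref{comm}.'' The paper's own proof of the present lemma is, by contrast, self-contained and direct: an induction on the level showing that products $\alpha\theta\alpha\m\theta\m$ realize every even pattern of active v.p.\ on each $X^l$, $l<k-1$, and separately on the two halves $X_1$, $X_2$ of the last level, followed by a verification that the set of elements with this index structure is closed under multiplication and conjugation, hence coincides with the derived subgroup. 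So what you have actually established is that the present lemma and Lemma~\ref{L_k_comm_criteria} are \emph{equivalent} statements; you have not proved either one.

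The gap is concentrated in the sufficiency direction. Necessity you could rescue without Lemma~\ref{L_k_comm_criteria}: from $G_k'\leq B_k'$ and Lemma~\ref{comm B_k old} you get even indexes of $g$ on every level of $X^{[k]}$, and the map $G_k\to C_2$ sending $(g_1,g_2)\pi$ to the parity of the permutation that $g_1$ induces on $X^{k-1}$ is a homomorphism (well defined since $g_1g_2\in G_{k-1}$ forces the two sections' parities to agree), so it vanishes on $G_k'$, giving $g_1,g_2\in G_{k-1}$, i.e.\ the per-subtree condition. But sufficiency — that every element of $G_k$ with the stated index pattern really is a product of commutators of $G_k$ — is the substantive half, and your only source for it is the circular citation. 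A non-circular repair inside the paper's toolkit: write such an element as $(r_1,\, r_1\m x)$ with $r_1\in G_{k-1}$ and $x\in B_{k-1}'$, invoke Theorem~\ref{_comm_F_k_eq_[L_k,F_k]} to get $x=[f,g]$ with $f\in B_{k-1}$, $g\in G_{k-1}$, and then use the explicit commutator of Corollary~\ref{c_2_wr_b_elem_repr} together with the membership checks carried out in the proof of Theorem~\ref{_comm_G_k_eq_[G_k,G_k]} (via Propositions~\ref{B_k_criteria}, \ref{comm_F_k_is_subgroup_of_L_k}, \ref{G_k_is_normal_in_B_k} and~\ref{B'_k and B^2_k}, none of which depends on the present lemma) to exhibit the element as a single commutator of elements of $G_k$. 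As written, though, your proof inverts the paper's logical order and rests on the statement it is supposed to establish.
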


\begin{proof}

Let us prove the ampleness by induction on a number of level $l$ and index of automorphism $g$ on $X^l$.
Conjugation by automorphism $\alpha$ from $Aut{{v}_{11}}{{X}^{\left[ k-1 \right]}}$ of automorphism $\theta $, that has index $x:$ $1 \leq x \leq 2^{k-2}$ on ${{X}_{1}}$ does not change $x$. Also automorphism $\theta^{-1} $ has the same number $x$ of v. p. on $X_{k-1}$ as $\theta $ has. If $\alpha$ from $Aut{{v}_{11}}{{X}^{\left[ k-1 \right]}}$ and $ \alpha \notin Aut{{X}^{\left[ k \right]}}$ then conjugation $(\alpha \theta {{\alpha }^{-1}})$ permutes vertices only inside $X_1$ ($X_2$).

 Thus, ${\alpha }\theta {\alpha^{-1} }$ and $\theta$ have the same parities of number of active v.p. on $X_1$ ($X_2$). Hence, a product ${\alpha }\theta {\alpha^{-1} } \theta^{-1}$ has an even number of active v.p. on $X_1$ ($X_2$) in this case. More over  a coordinate-wise sum by \texttt{mod2} 
  of active v. p. from $(\alpha \theta {{\alpha }^{-1}})$ and $\theta^{-1}$ on $X_1$ ($X_2$) is even and equal to $y:$ $0 \leq y \leq 2x$.


   If conjugation by $\alpha$ permutes sets $X_1$ and $X_2$ then there are  coordinate-wise sums of no trivial v.p. from $\alpha \theta \alpha^{-1} \theta^{-1}$ on $X_1$ (analogously on $X_2$) have form: \\ $({{s}_{k-1,1}}(\alpha \theta {{\alpha }^{-1}}),..., {{s}_{k-1, 2^{k-2}}}(\alpha \theta {{\alpha }^{-1}}) )\oplus ({{s}_ {k-1,1}}(\theta^{-1}), ..., {{s}_{k-1,{{2}^{k-2}}}}(\theta^{-1} ))$.
   This sum has even number of v.p. on $X_1$ and $X_2$ because $(\alpha \theta {{\alpha }^{-1}})$ and ${{\theta }^{-1}}$ have a same parity of no trivial v.p. on $X_1$ ($X_2$).  Hence, $(\alpha \theta {{\alpha }^{-1}}){{\theta }^{-1}}$ has even number of v.p. on ${{X}_{1}}$ as well as on ${{X}_{2}}$.


An automorphism $\theta $ from $G_k$ was arbitrary so number  of active v.p. $x$ on $X_1$ is arbitrary $0 \leq x\leq 2^l$. And ${\alpha }$ is arbitrary from $AutX^{[k-1]}$ so vertices can be permuted in such way that the commutator $[{\alpha },\theta]$ has arbitrary even number $y$ of active v.p. on $X_1$, $0 \leq y \leq 2x$.

 A conjugation of an automorphism $\theta $ having index $x$, $1 \leq x \leq 2^{l}$ on ${{X}^{l}}$  by different $\alpha \in Aut{{X}^{[k]}}$  gives us all tuples of active v.p. with the same $\rho_l(\theta)$ that $\theta $ has on ${{X}^{l}}$, by which $Aut{{X}^{[k]}}$ acts on $X^l$. Let supposition  of induction for element $g$  with index $2k-2$ on $X^l$ holds so $g=(\alpha \theta {{\alpha }^{-1}}){{\theta }^{-1}}$, where $In_l(\theta)=x$. To make a induction step we complete $\theta$ by such active vertex $v_{l,x}$ too it has suitable distance structure for $g=(\alpha \theta {{\alpha }^{-1}}){{\theta }^{-1}}$, also if $g$ has rather different distance structure $d_l(g)$ from $d_l(\theta)$ then have to change $\theta$. In case when we complete $\theta$ by $v_{l,x}$ it has too satisfy a condition $(\alpha \theta {{\alpha }^{-1}}) (v_{x+1})=v_{l, y}$, where $v_{l, y}$ is a new active vertex of g on $X^l$.
  Note that $v(x+1) $ always can be chosen such that acts in such way $\alpha(v(x+1)) = v(2k+2)$ because action  of $\alpha$ is 1-transitive. Second vertex arise when we multiply $(\alpha \theta {{\alpha }^{-1}})$ on $\theta^{-1}$. 
  Hence $In_l(\alpha \theta {{\alpha }^{-1}})=2k+2$ and coordinates of new vertices $v_{2k+1}, v_{2k+2}$ are arbitrary from 1 to $2^l$.


 So multiplication $(\alpha \theta {{\alpha }^{-1}})\theta $ generates a commutator having index $y$  equal to coordinate-wise sum by $mod 2$ of no trivial v.p. from vectors $({{s}_{l1}}(\alpha \theta {{\alpha }^{-1}}),{{s}_{l}}_{2}(\alpha \theta {{\alpha }^{-1}}),...,{{s}_{l{{2}^{l}}}}(\alpha \theta {{\alpha }^{-1}}))\oplus ({{s}_{l1}}(\theta ),{{s}_{l}}_{2}(\theta ),...,{{s}_{l{{2}^{l}}}}(\theta ))$  on ${{X}^{l}}$. A indexes parities of  $\alpha \theta {{\alpha }^{-1}}$  and  ${{\theta }^{-1}}$ are same so their sum by $mod 2$ are even.  Choosing $\theta $ we can  choose an arbitrary index $x$ of $\theta $ also we can choose arbitrary $\alpha $ to make a permutation of active v.p. on ${{X}^{l}}$.  Thus, we obtain an element with arbitrary even index on ${{X}^{l}}$ and arbitrary location of active v.p. on ${{X}^{l}}$.

Check that property of number parity of v.p. on ${{X}_{1}}$  and on ${{X}_{2}}$  is closed with respect to conjugation. We know that numbers of active v. p. on ${{X}_{1}}$ as well as on ${{X}_{2}}$ have the same parities. So
action by conjugation only can permutes it, hence, we again get the same  structure of element. Conjugation by automorphism $\alpha $ from  $Aut{{v}_{11}}{{X}^{\left[ k-1 \right]}}$  automorphism $\theta $, that has odd number of  active v. p. on ${{X}_{1}}$  does not change its parity.
Choosing the $\theta $ we can choose arbitrary index $x$ of
$\theta $ on ${{X}^{k-1}}$ and number of active v.p. on ${{X}_{1}}$  and  ${{X}_{2}}$  also we can choose arbitrary $\alpha $ to make a permutation active v.p. on ${{X}_{1}}$  and  ${{X}_{2}}$. Thus, we can generate all possible elements from a commutant. Also this result follows from Lemmas \ref{L_k_comm_criteria} and \ref{comm B_k old}.

Let us check that the set of all commutators $K$ from $Syl_2 A_{2^k}$ is closed with respect  to multiplication of commutators. Let $\kappa_1, \kappa_2 \in K$ then $\kappa_1 \kappa_2$ has an even index on $X^l$, $l<k-1$ because  coordinate-wise sum $({{s}_{l,1}}(\kappa_1),..., {{s}_{k-1, 2^l}}(\kappa_1) )\oplus ({{s}_ {l,\kappa_1(1)}}(\kappa_2), ..., {{s}_{l,\kappa_1({{2}^{l}})}}(\kappa_2 ))$.
 of two $2^l$-tuples of v.p. with an even number of no trivial coordinate has even number of such coordinate.  Note that conjugation of $\kappa $ can permute sets ${{X}_{1}}$ and ${{X}_{2}}$  so parities of $x_1$ and $X_2$ coincide. It is obviously index of $\alpha \kappa \alpha^{-1}$ is even as well as index of $\kappa $.

Check that a set $K$ is a set  closed with respect  to conjugation.

 Let $\kappa \in K$, then $\alpha \kappa {{\alpha }^{-1}}$  also belongs to $K$, it is so because  conjugation does not change index of an automorphism on a level. Conjugation only  permutes vertices on level because elements of $Aut{{X}^{\left[ l-1 \right]}}$ acts  on vertices of  ${{X}^{l}}$. But as it was proved above elements  of $K$ have all possible indexes on ${{X}^{l}}$, so as a result of conjugation $\alpha \kappa {{\alpha }^{-1}}$ we obtain an element from $K$.

Check that the set of commutators is closed with respect to multiplication of commutators.
Let $\kappa_1, \kappa_2 $ be an arbitrary commutators of $G_k$. The parity of the number of vertex permutations on $X^l$ in the product $\kappa_1 \kappa_2 $  is determined exceptionally by the parity of the numbers of active v.p. on ${{X}^{l}}$ in $\kappa_1$ and $\kappa_2$ (independently from the action of v.p. from the higher levels). Thus $\kappa_1 \kappa_2 $ has an even index on $X^l$.

 Hence, normal closure of the set $K$ coincides with $K$.
\end{proof}
\begin{lemma} \label{comm}
An element $g=(g_1, g_2)\sigma^{i}$ of $G_k$, $i\in\{0,1\}$ belongs to $G'_k$ iff $g$ has even index on ${{X}^{l}}$ for all $l<k-1$ and elements $g_1, g_2$ have even indexes on ${{X}^{k-1}}$, that is equally matched to $g_1, g_2 \in G_{k-1}$.
 \end{lemma}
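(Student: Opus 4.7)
The plan is to deduce the lemma as a direct reformulation of Lemma~\ref{L_k_comm_criteria} combined with Lemma~\ref{comm B_k old}. Lemma~\ref{L_k_comm_criteria} already characterizes $G_k'$ as those $(g_1,g_2)\sigma^i$ for which $g_1,g_2\in G_{k-1}$ and $g_1g_2\in B_{k-1}'$; the task is therefore to rewrite the condition $g_1g_2\in B_{k-1}'$ as an even-index condition on $g$ itself.

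First I would record how indexes transform under the wreath recursion. For $g=(g_1,g_2)\sigma^i$, the active v.p.\ of $g$ on level $X^l$ of $X^{[k]}$ with $l\geq 1$ are precisely the active v.p.\ of $g_1$ on level $X^{l-1}$ of its subtree together with those of $g_2$, while the unique v.p.\ on $X^0$ is $\sigma^i$. Thus modulo $2$ we have $\mathrm{ind}(g,X^l)\equiv \mathrm{ind}(g_1,X^{l-1})+\mathrm{ind}(g_2,X^{l-1})\equiv \mathrm{ind}(g_1g_2,X^{l-1})$ for every $l\geq 1$, while $\mathrm{ind}(g,X^0)=i$. Hence ``$g$ has even index on every $X^l$ with $l<k-1$'' translates into $i=0$ together with $g_1g_2$ having even index on every $X^{l'}$ with $0\leq l'\leq k-3$ of $X^{[k-1]}$.

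Next I would feed in the second hypothesis. The clause ``$g_1,g_2$ have even index on $X^{k-1}$'' in the ambient tree is exactly the top v.p.\ level of each subtree; this is precisely the criterion for $g_1,g_2\in G_{k-1}$, since acting by an even permutation on the bottom level of $X^{[k-1]}$ is governed by the parity of the number of active v.p.\ at its top v.p.\ level. Summing these two parities forces $g_1g_2$ to be even-indexed at that last level as well, so, combining with the previous paragraph, $g_1g_2$ is even-indexed at every level $0,\ldots,k-2$ of $X^{[k-1]}$; by Lemma~\ref{comm B_k old}, $g_1g_2\in B_{k-1}'$, and Lemma~\ref{L_k_comm_criteria} delivers $g\in G_k'$. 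The converse runs the same chain in reverse: $g\in G_k'$ gives $g_1,g_2\in G_{k-1}$ and $g_1g_2\in B_{k-1}'$ by Lemma~\ref{L_k_comm_criteria}, and $G_k'\subseteq B_k'$ together with Lemma~\ref{comm B_k old} forces $i=0$ and the claimed even-index conditions via the index translation.

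The actual work here is routine index bookkeeping; the main obstacle I anticipate is simply aligning the level numbering between the ambient tree $X^{[k]}$ and its two principal subtrees rooted at $v_{1,1}$ and $v_{1,2}$, so that the clause ``even index on $X^{k-1}$'' is unambiguously read as the top v.p.\ level of each subtree and therefore really coincides with the parenthetical equivalence $g_1,g_2\in G_{k-1}$ asserted by the statement.
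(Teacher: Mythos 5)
Your proposal is correct and follows essentially the paper's own route: the paper likewise disposes of this lemma as an immediate consequence of Lemma~\ref{L_k_comm_criteria} together with its earlier parity characterization of commutator subgroups, which is exactly your reduction. The only cosmetic difference is that you make the translation explicit through Lemma~\ref{comm B_k old} and the mod-$2$ additivity of indexes under the wreath recursion (including noting that the $l=0$ condition forces $i=0$), which is precisely the bookkeeping the paper's one-line citation leaves implicit.
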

 \begin{proof}
The proof implies from Lemma \ref{comm} and Lemma \ref{L_k_comm_criteria}.
 \end{proof}

Using this structural property of $(Syl_2 A_{2^k})'$ we deduce a following result.
\begin{theorem}\label{_comm_G_k_eq_[G_k,G_k]} Commutator subgroup $G'_k$ coincides with set of all commutators, put it differently
$\syl[k]'=\{[f_1,f_2]\mid f_1\in \syl[k], f_2\in \syl[k]\}$.
\end{theorem}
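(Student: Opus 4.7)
My plan is to imitate the induction step of Theorem~\ref{_comm_F_k_eq_[L_k,F_k]} but to exploit the stronger hypothesis that $w\in G_k'$ rather than merely $w\in B_k'$, and then to check that the two factors produced by the explicit construction of Corollary~\ref{c_2_wr_b_elem_repr} can actually be chosen inside $G_k$. The base cases $k\leq 2$ (where $G_k'$ is trivial) are handled directly.

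First I would parse $w\in G_k'$. By the second Lemma~\ref{comm} together with Lemma~\ref{L_k_comm_criteria}, one has $w=(g_1,g_2)$ with $g_1,g_2\in G_{k-1}$ and $g_1g_2\in B_{k-1}'$. Applying Theorem~\ref{_comm_F_k_eq_[L_k,F_k]} to $B_{k-1}'$ produces $g_1g_2=[f,g]$ with $f\in B_{k-1}$ and $g\in G_{k-1}$. Setting $r_1:=g_1\in G_{k-1}$ puts $w$ in the form $(r_1,\,r_1^{-1}[f,g])$ required by Corollary~\ref{c_2_wr_b_elem_repr}, which then exhibits
\[
w=[(e,a_{1,2})\sigma,\,(a_{2,1},a_{2,2})]
\]
with $a_{2,1}=(f^{-1})^{r_1^{-1}}$, $a_{2,2}=r_1 a_{2,1}$ and $a_{1,2}=g^{a_{2,2}^{-1}}$.

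Next I would verify via Proposition~\ref{B_k_criteria} that both factors lie in $G_k$, not just in $B_k$. The first factor $(e,a_{1,2})\sigma$ lies in $G_k$ iff $a_{1,2}\in G_{k-1}$, which is immediate from $g\in G_{k-1}$ and the normality $G_{k-1}\lhd B_{k-1}$ (Proposition~\ref{G_k_is_normal_in_B_k}). The second factor $(a_{2,1},a_{2,2})$ lies in $G_k$ iff $a_{2,1}a_{2,2}\in G_{k-1}$. Rewriting $a_{2,1}a_{2,2}=a_{2,1}^2\cdot r_1^{a_{2,1}}$, the conjugate $r_1^{a_{2,1}}$ lies in $G_{k-1}$ by normality, and the square $a_{2,1}^2=(f^{-2})^{r_1^{-1}}$ lies in $G_{k-1}$ because Proposition~\ref{B'_k and B^2_k} gives $f^{-2}\in B_{k-1}'$ and Proposition~\ref{comm_F_k_is_subgroup_of_L_k} gives $B_{k-1}'\subset G_{k-1}$.

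The main obstacle I expect is precisely this second verification: $a_{2,1}$ on its own only belongs to $B_{k-1}$, so at first glance $(a_{2,1},a_{2,2})$ could fail the parity condition defining $G_k$. The rescue is the square identity $a_{2,1}a_{2,2}=a_{2,1}^2\cdot r_1^{a_{2,1}}$, combined with the fact that squares in $B_{k-1}$ always fall into $B_{k-1}'\subset G_{k-1}$. Once this parity bookkeeping is in place, the identity $w=[\alpha,\beta]$ with $\alpha,\beta\in G_k$ is established, and together with the trivial inclusion $\{[f_1,f_2]:f_1,f_2\in G_k\}\subseteq G_k'$ this proves the theorem.
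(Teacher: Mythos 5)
Your proposal is correct and takes essentially the same approach as the paper's own proof: reduce $w\in G_k'$ to the form $(r_1,\,r_1\m[f,g])$ with $r_1\in G_{k-1}$, $f\in B_{k-1}$, $g\in G_{k-1}$ via Lemma~\ref{L_k_comm_criteria} and Theorem~\ref{_comm_F_k_eq_[L_k,F_k]}, apply the explicit construction of Corollary~\ref{c_2_wr_b_elem_repr}, and check membership of the two factors in $G_k$ through Propositions~\ref{B_k_criteria}, \ref{G_k_is_normal_in_B_k}, \ref{comm_F_k_is_subgroup_of_L_k} and~\ref{B'_k and B^2_k}. The only difference is a harmless notational one in the key parity step: under the paper's convention $a^b=bab\m$ your identity should read $a_{2,1}a_{2,2}=a_{2,1}^2\, r_1^{a_{2,1}\m}$, which is the same bookkeeping as the paper's factorization $a_{2,1}r_1a_{2,1}=r_1[r_1\m,a_{2,1}]a_{2,1}^2$.
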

\begin{proof}
For the case $k=1$ we have $G_1' = \langle e \rangle$. So, further we consider case $k\geq 2$. In order to prove this Theorem we fix arbitrary element $w\in G_k'$  and then we represent this element as commutator of elements from $G_k$.

We already know by Lemma~\ref{L_k_comm_criteria} that every element $w\in\syl[k]'$ we can represent as follow
\begin{align*}
w=(r_1, r_1\m x),
\end{align*}
where  $r_1  \in G_{k-1}$ and $ x \in B'_{k-1}$. 
By proposition~\ref{_comm_F_k_eq_[L_k,F_k]} 
we have $ x = [f,g]$ for some $f\in\aut[k-1]$ and $g\in\syl[k-1]$. Therefore
\begin{align*}
w=(r_1, r_1\m [f,g]).
\end{align*}

By the Corollary~\ref{c_2_wr_b_elem_repr} we can represent $w$ as commutator of
\begin{align*}
(e,a_{1,2})\sigma \in \aut[k] \mbox{ and } (a_{2,1}, a_{2,2}) \in \aut[k],
\end{align*}
where
\begin{align*}
a_{2,1} &= (f\m)^{r_1\m},\\
a_{2,2} &= r_{1} a_{2,1},\\
a_{1,2} &= g^{a_{2,2}\m}.
\end{align*}
It is only left to show that $(e,a_{1,2})\sigma, (a_{2,1}, a_{2,2}) \in G_k$.

In order to use Proposition~\ref{B_k_criteria} we note that
\begin{align*}
a_{1,2} = g^{a_{2,2}\m} &\in G_{k-1}\mbox{ by Proposition~\ref{G_k_is_normal_in_B_k}}.\\
a_{2,1} a_{2,2} = a_{2,1} r_1 a_{2,1} = r_1 [r_1, a_{2,1}] a_{2,1}^2 &\in \syl[k-1]\mbox{ by Proposition~\ref{comm_F_k_is_subgroup_of_L_k} and Proposition~\ref{B'_k and B^2_k}}.
\end{align*} 

So we have $(e,a_{1,2})\sigma \in \syl[k]$ and $(a_{2,1}, a_{2,2}) \in \syl[k]$. 
\end{proof} 

\begin{corollary}
Commutator width of the group $Syl_2 A_{2^k}$ equal to $1$ for $k\geq 2$.
\end{corollary}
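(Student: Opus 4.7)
The plan is to combine the preceding Theorem~\ref{_comm_G_k_eq_[G_k,G_k]} with the non-triviality of the derived subgroup. First I would observe that Theorem~\ref{_comm_G_k_eq_[G_k,G_k]} already provides the inequality $cw(\syl[k]) \leq 1$, since it asserts that every element $w \in \syl[k]'$ can be written as a single commutator $[f_1,f_2]$ with $f_1,f_2 \in \syl[k]$. Thus the commutator length of every element of the derived subgroup is at most $1$, which gives the upper bound immediately, with no further computation needed.

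Next I would establish the matching lower bound $cw(\syl[k]) \geq 1$, which amounts to showing that $\syl[k]' \neq \langle e \rangle$, i.e.\ that $\syl[k]$ is non-abelian for $k \geq 2$. For $k=2$, $\syl[2] \simeq Syl_2 A_4$ is the Klein four group inside $A_4$; but one can alternatively note that $G_k$ contains $B_{k-1}$ as a subgroup via the decomposition $G_k \simeq B_{k-1} \ltimes W_{k-1}$ from Theorem~\ref{max}, and for $k \geq 2$ the factor $B_{k-1} = \wr_{i=1}^{k-1} C_2$ is already non-abelian once $k-1 \geq 2$; the remaining case $k=2$ can be verified directly by exhibiting two non-commuting generators of $\syl[2]$ (for example the double transposition generators of the Klein four subgroup conjugated by a suitable element show non-triviality at the appropriate level, or simply invoke that $W_1$ acts non-trivially). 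Combining both bounds yields $cw(\syl[k]) = 1$ for all $k \geq 2$.

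I expect no serious obstacle here: the entire content lies in Theorem~\ref{_comm_G_k_eq_[G_k,G_k]}, and the corollary is essentially a restatement using the definition of commutator width. The only minor point worth handling carefully is the lower bound, which must rule out the degenerate case where $\syl[k]$ is abelian; this is straightforward from the explicit structure $G_k \simeq B_{k-1} \ltimes W_{k-1}$ established earlier, since $W_{k-1}$ acts faithfully and non-trivially on $B_{k-1}$ for $k \geq 2$.
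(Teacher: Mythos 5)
Your upper bound is exactly the paper's (implicit) derivation: the corollary is stated in the paper with no proof at all, because Theorem~\ref{_comm_G_k_eq_[G_k,G_k]} exhibits every element of $G_k'$ as a single commutator of elements of $G_k$, giving $cw(G_k)\leq 1$ immediately. The genuine gap is in your lower bound at $k=2$. You assert that the case $k=2$ "can be verified directly by exhibiting two non-commuting generators of $G_2$" — but no such pair exists: $Syl_2 A_4$ is the Klein four group $K_4=\{e,(12)(34),(13)(24),(14)(23)\}$, which is abelian, a fact the paper itself records in its closing remark ("$Syl_2 A_4 \simeq K_4$ but $K_4$ is abelian group"). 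One sees this also from the paper's recursive description: $G_2=\{(g_1,g_2)\pi^i \mid g_1g_2\in G_1=\langle e\rangle\}$ forces $g_1=g_2$ in $C_2$, so all four elements commute. Hence $G_2'=\langle e\rangle$ and $cw(G_2)=0$, so the corollary as stated actually fails at $k=2$ and no argument can rescue it there; your fallback claim that "$W_1$ acts non-trivially" in $G_2\simeq B_1\ltimes W_1$ is false for the same reason.

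For $k\geq 3$ your argument is sound and is the natural completion of the paper's omitted proof: $B_{k-1}$ embeds in $G_k$ via $G_k\simeq B_{k-1}\ltimes W_{k-1}$ (Theorem~\ref{max}), and $B_{k-1}=\wr_{i=1}^{k-1}C_2$ is non-abelian once $k-1\geq 2$ (already $B_2\simeq C_2\wr C_2$ is dihedral of order $8$), so $G_k'\neq\langle e\rangle$ and together with the upper bound $cw(G_k)=1$. So what your proposal actually establishes is $cw(Syl_2 A_{2^k})=1$ for $k\geq 3$, together with $cw(Syl_2 A_4)=0$; the stated range $k\geq 2$ is an error in the paper which your write-up inherits and then papers over with a non-existent pair of non-commuting elements. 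The fix is to restate the corollary for $k\geq 3$ (or to say $cw(Syl_2 A_{2^k})\leq 1$ for all $k$), not to hunt for non-commuting generators in an abelian group.
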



\begin{theorem}
 The centralizer of  $Syl_{2}{{S}_{{{2}^{{{k}_{i}}}}}}$ \cite{Kal, Sk} with ${{k}_{i}}>2$, in $Sy{{l}_{2}}{{S}_{n}}$  is isomorphic to ${}^{Syl_2 {A}_{n}}/{}_{Sy{{l}_{2}}{{S}_{{{2}^{{{k}_{i}}}}}}}  \times Z( Syl_2 {S}_{2^{{{k}_{i}}}}) $.
 \end{theorem}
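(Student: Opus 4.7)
The plan is to exploit the classical direct-product decomposition of $Syl_2 S_n$ corresponding to the binary expansion of $n$. Write $n = 2^{k_1} + 2^{k_2} + \ldots + 2^{k_m}$ with pairwise distinct $k_i$; then $Syl_2 S_n \simeq B_{k_1} \times \ldots \times B_{k_m}$, with each $B_{k_s}$ acting faithfully on its own orbit of size $2^{k_s}$ and trivially on the others. Throughout, fix the distinguished index $i$ (with $k_i>2$) and set $D = \prod_{s \neq i} B_{k_s}$, so that $Syl_2 S_n = B_{k_i} \times D$.

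First I would carry out the centralizer computation inside this direct product. In any direct product $A \times C$, the centralizer of the first factor $A$ is manifestly $Z(A) \times C$, since the $C$-component of a centralizing element is unconstrained while its $A$-component must commute with all of $A$. Applied to our splitting this yields immediately
\[
C_{Syl_2 S_n}(Syl_2 S_{2^{k_i}}) \;=\; Z(B_{k_i}) \times D.
\]

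Second I would identify $D$ with the quotient appearing in the theorem. Consider the map $\pi : Syl_2 A_n \hookrightarrow Syl_2 S_n \twoheadrightarrow D$ obtained by composing the inclusion with the projection that forgets the $i$-th coordinate. Its kernel consists of tuples vanishing outside the $i$-th slot whose $i$-th entry lies in $Syl_2 A_n$; since the alternating condition on such a tuple reduces to the $i$-th coordinate having trivial sign, the kernel is precisely $G_{k_i} = Syl_2 A_{2^{k_i}}$, which is the intended reading of the denominator $Syl_2 S_{2^{k_i}}$ in the statement (the full $B_{k_i}$ is not contained in $Syl_2 A_n$, so the quotient only makes sense after intersecting with $Syl_2 A_n$). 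Surjectivity of $\pi$ is routine: the sign map $B_{k_i} \to C_2$ is onto for $k_i \geq 1$, so given any $(b_s)_{s \neq i} \in D$ one selects $b_i \in B_{k_i}$ with $\mathrm{sgn}(b_i) = \prod_{s \neq i}\mathrm{sgn}(b_s)$, producing a preimage in $Syl_2 A_n$. The first isomorphism theorem gives $D \simeq Syl_2 A_n/G_{k_i}$, and substituting into the display above completes the proof.

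The main obstacle, in my view, is bookkeeping rather than substance. The notational awkwardness in the theorem (the denominator is written as $Syl_2 S_{2^{k_i}}$, but to form a quotient inside $Syl_2 A_n$ one must replace it by $Syl_2 A_{2^{k_i}} = G_{k_i}$) has to be spelled out, and one must verify that the hypothesis $k_i>2$ is exactly what guarantees that the sign homomorphism of $B_{k_i}$ agrees with the top-level index parity as used throughout the paper, and that $Z(B_{k_i})$ is the rank-one central subgroup appearing in the formula; for $k_i \leq 2$ the factor $B_{k_i}$ is abelian or nearly so and the formula degenerates. Beyond these careful identifications the proof is a direct combination of the direct-product decomposition of $Syl_2 S_n$, the elementary centralizer fact in a direct product, and the first isomorphism theorem.
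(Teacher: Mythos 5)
Your proof is correct, and it is in fact tighter than the paper's own argument, although both start from the same structural fact. The paper's proof concentrates almost entirely on the factor $Syl_2 S_{2^{k_i}} \simeq B_{k_i}$ itself: it shows, via transitivity of the active group and Corollary 4.4 of \cite{Meld} together with an explicitly constructed non-commuting pair $\alpha, \beta$, that $Z(B_{k_i}) \simeq C_2$, and then simply asserts the final centralizer formula --- writing it, moreover, with numerator $Syl_2 S_n$ in the proof but $Syl_2 A_n$ in the statement, and with a denominator $Syl_2 S_{2^{k_i}}$ that is not literally a subgroup of $Syl_2 A_n$. You instead take the Kaloujnine decomposition $Syl_2 S_n \simeq B_{k_1} \times \cdots \times B_{k_m}$ as the explicit starting point, apply the elementary identity $C_{A \times C}(A) = Z(A) \times C$ --- thereby bypassing any computation of $Z(B_{k_i})$, which the statement keeps unevaluated anyway --- and, crucially, you supply the step the paper omits entirely: the identification of the complementary factor $D = \prod_{s \neq i} B_{k_s}$ with the quotient, via the sign character and the first isomorphism theorem, with kernel $G_{k_i} = Syl_2 A_{2^{k_i}}$. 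This also cleanly resolves the notational clash in the source, since $Syl_2 S_n / B_{k_i} \simeq D \simeq Syl_2 A_n / G_{k_i}$, so the proof's version and your reading of the statement agree. One small caveat: your closing remark that the formula ``degenerates'' for $k_i \leq 2$ is not quite accurate --- your own kernel and surjectivity computations go through verbatim for $k_i \in \{1,2\}$, because the sign map on $B_{k_i}$ is already onto for $k_i \geq 1$; the hypothesis $k_i > 2$ is thus not load-bearing in your argument, but this side comment does not affect the validity of the proof in the stated range.
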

\begin{proof} Actually, the action of active group $A \simeq \wr^{k_i-1}_{i=1} C_2$ of $Syl_2 {S}_{2^{{{k}_{i}}}}$ on $X^{k-1}$ is transitive since the orbit of $A$ on $X^{k-1}$ is one. Then $Z ( \wr^{k_i}_{j=1} C_2) \simeq C_2$ results by formula from Corollary 4.4 \cite{Meld}.

For any no trivial automorphism $\alpha $ from $Aut{{X}^{[{{k}_{i}}]}} \simeq Sy{{l}_{2}}{{S}_{{{2}^{{{k}_{i}}}}}}$ there exists a vertex ${{v}_{jm}}$, where v.p. from $\alpha$ is active. Thus, there exists  graph path $r$ connecting  the root  ${{v}_{0}}$  with a vertex of ${{X}^{{{k}_{i}}}}$ and pathing through  the ${{v}_{jm}}$.  We can choose vertex ${{v}_{li}}$ on $r$ such that $l\ne j$. So there exists $\beta \in Aut{{X}^{{[{k}_{i}]}}}$ that has active v.p. in ${{v}_{li}}$. Then we have $\alpha \beta \ne \beta \alpha $. The center of $Aut{{X}^{[{{k}_{i}}]}}$  is isomorphic to $C_2$. As a result we have ${{C}_{Aut{{S}_{n}}}}(Sy{{l}_{2}}{{S}_{{{2}^{{{k}_{i}}}}}})\simeq {}^{Sy{{l}_{2}}{{S}_{n}}}/{}_{Sy{{l}_{2}}{{S}_{{{2}^{{{k}_{i}}}}}}} \times Z( Syl_2 {S}_{2^{{{k}_{i}}}})$.
\end{proof}	

Let us present new operation $\boxtimes $ (similar to that is in \cite{Dm}) as an even subdirect product of $Syl{{S}_{{{2}^{i}}}}$, $n = {{2}^{{{k}_{0}}}}+{{2}^{{{k}_{1}}}}+...+{{2}^{{{k}_{m}}}}$, $0\le {{k}_{0}}<{{k}_{1}}<...<{{k}_{m}}$.

\begin{theorem}
 The centralizer of $Syl_{2}{{A}_{{{2}^{{{k}_{i}}}}}}$ with ${{k}_{i}}>2$, in $Sy{{l}_{2}}{{A}_{n}}$ is isomorphic to $Sy{{l}_{2}}{{S}_{{{2}^{{{k}_{0}}}}}}\boxtimes ... \boxtimes  Sy{{l}_{2} S_{{{2}^{{{k}_{i-1}}}}}}\boxtimes Sy{{l}_{2} S_{{{2}^{{{k}_{i+1}}}}}}\boxtimes  ...\boxtimes Syl_2 S_{{{2}^{{{k}_{m}}}}}  \boxtimes Z( Syl_2 S_{2^{{k_{i}}}})$.
 \end{theorem}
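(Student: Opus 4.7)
The plan is to reduce the centralizer computation to a coordinate-wise analysis on the even subdirect product decomposition of $Syl_2 A_n$, then to identify the contribution at the $i$-th factor. First I would fix the embedding $Syl_2 A_n \hookrightarrow Syl_2 S_{2^{k_0}} \times \ldots \times Syl_2 S_{2^{k_m}}$ as the set of tuples whose total sign is $+1$, and embed $Syl_2 A_{2^{k_i}}$ as the subgroup supported only at the $i$-th coordinate (this lies in $Syl_2 A_n$ because each of its elements is even on its own orbit of size $2^{k_i}$). Any element $(\alpha_0, \ldots, \alpha_m)$ of the ambient direct product that centralizes this subgroup must have $\alpha_i \in C_{Syl_2 S_{2^{k_i}}}(Syl_2 A_{2^{k_i}})$, while the remaining $\alpha_j$ are unconstrained beyond the global even-sign condition. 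This reduction parallels the strategy of the preceding theorem, which treated the centralizer of $Syl_2 S_{2^{k_i}}$.

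The central technical step is to show that for $k = k_i > 2$ one has
\[
C_{B_k}(G_k) = Z(B_k),
\]
where I write $B_k = Syl_2 S_{2^k}$ and $G_k = Syl_2 A_{2^k}$. I would argue by induction on $k$, presenting a candidate element as $(h_1, h_2)\sigma^j$ and testing it against the family $(a, a\m) \in G_k$ for arbitrary $a \in B_{k-1}$ (valid by Proposition~\ref{B_k_criteria}, since $a \cdot a\m = e \in G_{k-1}$) and against $(e,e)\sigma \in G_k$ (which lies in $G_k$ because $\sigma$ corresponds to an even permutation of $2^k$ points for $k \geq 2$). The $j = 1$ case forces conjugation by $h_1$ to invert every element of $B_{k-1}$, which is impossible since $B_{k-1}$ is non-abelian for $k-1 \geq 2$; the $j = 0$ case forces $h_1, h_2 \in Z(B_{k-1})$, and commuting with $(e,e)\sigma$ additionally forces $h_1 = h_2$. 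The centralizer is thus the diagonal $\{(z,z) : z \in Z(B_{k-1})\}$, which by Corollary 4.4 of Meldrum~\cite{Meld} coincides with $Z(B_k) \cong C_2$. Ruling out $j = 1$ is the main obstacle, and it is precisely where the hypothesis $k_i > 2$ is used.

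Finally I would assemble the global answer. The centralizer in the unrestricted direct product is
\[
Syl_2 S_{2^{k_0}} \times \ldots \times Syl_2 S_{2^{k_{i-1}}} \times Z(Syl_2 S_{2^{k_i}}) \times Syl_2 S_{2^{k_{i+1}}} \times \ldots \times Syl_2 S_{2^{k_m}}.
\]
Intersecting with the even subdirect product $Syl_2 A_n$, observe that the generator of $Z(Syl_2 S_{2^{k_i}})$, being the product of all $2^{k_i - 1}$ deepest-level transpositions, is itself an even permutation for $k_i \geq 2$; therefore the even-sign condition imposes no restriction on the $i$-th factor and reduces to the even subdirect product condition on the remaining $m$ coordinates. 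This yields precisely the claimed presentation
\[
Syl_2 S_{2^{k_0}} \boxtimes \ldots \boxtimes Syl_2 S_{2^{k_{i-1}}} \boxtimes Syl_2 S_{2^{k_{i+1}}} \boxtimes \ldots \boxtimes Syl_2 S_{2^{k_m}} \boxtimes Z(Syl_2 S_{2^{k_i}}).
\]
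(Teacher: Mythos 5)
Your proof is correct, and its key step takes a genuinely different route from the paper's. Both arguments share the same global frame: $Syl_2 A_n$ is the even subdirect product ($\boxtimes$) of the blocks $Syl_2 S_{2^{k_j}}$, the centralizer condition constrains only the $i$-th coordinate, and the contribution of that block collapses to a copy of $C_2$. Where you diverge is in how the collapse is established. The paper argues geometrically on the tree: for every nontrivial $\alpha$ in the block it picks an active vertex $v_{jm}$, a path through it, and a $\beta$ with an active vertex elsewhere on that path, asserting $\alpha\beta \neq \beta\alpha$ and concluding that the relevant subgroup is the center $\cong C_2$; this sketch leaves implicit why such a $\beta$ always fails to commute, and it computes a \emph{center}, whereas what the coordinate-wise reduction actually requires is the \emph{centralizer} $C_{B_{k_i}}(G_{k_i})$ of the block inside $B_{k_i} = Syl_2 S_{2^{k_i}}$, since the $i$-th coordinate of an element of $Syl_2 A_n$ ranges over all of $B_{k_i}$, not just over $G_{k_i}$. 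Your wreath-recursion computation addresses exactly this point: writing a candidate as $(h_1,h_2)\sigma^j$ and testing it against $(a,a^{-1}) \in G_{k_i}$ (legitimate by Proposition~\ref{B_k_criteria}) and against $(e,e)\sigma$, you rule out $j=1$ because conjugation by $h_1$ would have to invert every element of $B_{k_i-1}$, which fails precisely because $B_{k_i-1}$ is non-abelian for $k_i>2$ --- thereby locating where the hypothesis $k_i>2$ enters (for $k_i=2$ the centralizer of $Syl_2A_4 \simeq K_4$ in $D_4$ is all of $K_4$, strictly larger than the center) --- and you identify the $j=0$ case with the diagonal of $Z(B_{k_i-1})$, i.e.\ with $Z(B_{k_i}) \cong C_2$ via Corollary 4.4 of \cite{Meld}. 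Your final sign bookkeeping (the generator of $Z(Syl_2 S_{2^{k_i}})$ is a product of $2^{k_i-1}$ deepest-level transpositions, hence even, so the global parity condition devolves onto the other $m$ coordinates) is also made explicit, where the paper passes to the $\boxtimes$ presentation directly. In short, the paper's route is shorter and geometric; yours is self-contained, pins down the role of $k_i>2$, and repairs the center-versus-centralizer ambiguity in the paper's assembly step.
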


\begin{proof}  We consider $G_k$ as a normal subgroup of $\wr^{k}_{j=1} C_2$.
Actually, the action of subgroup $A=B_{k_i-1}$ of $G_{k_i} \simeq Syl_{2}{{A}_{{{2}^{{{k}_{i}}}}}}$ on $X^{k-1}$ is transitive since the orbit of $A$ on $X^{k-1}$ is one.


There exists a vertex ${{v}_{jm}}$ for any no trivial automorphism $\alpha $ from $G_{{k}_{i}} \simeq Sy{{l}_{2}}{{A}_{{{2}^{{{k}_{i}}}}}}$, where v.p. from $\alpha$ is active. Thus there exists  graph path $r$ connecting  the root  ${{v}_{0}}$  with a vertex of ${{X}^{{{k}_{i}}}}$ and pathing through  the ${{v}_{jm}}$.  We can choose vertex ${{v}_{li}}$ on $r$ such that $l\ne j$. So there exists $\beta \in Aut{{X}^{{[{k}_{i}]}}}$ that has active v.p. in ${{v}_{li}}$. Then we have $\alpha \beta \ne \beta \alpha $. Since $Syl_2 A_{2^k} \simeq Aut X^{[k]}$, consequently the center of $Syl_2 A_{2^{k_i}}$  is isomorphic to $C_2$. As a result we have ${{C}_{Aut{{A}_{n}}}}(Sy{{l}_{2}}{{S}_{{{2}^{{{k}_{i}}}}}})\simeq
Sy{{l}_{2}}{{S}_{{{2}^{{{k}_{0}}}}}}\boxtimes ... \boxtimes  Sy{{l}_{2} S_{{{2}^{{{k}_{i-1}}}}}}\boxtimes Sy{{l}_{2} S_{{{2}^{{{k}_{i+1}}}}}}\boxtimes  ...\boxtimes Syl_2 S_{{{2}^{{{k}_{m}}}}}  \boxtimes Z(  A_{2^{{k_{i}}}})$.
\end{proof}	
Also we note that derived length of $Syl_2 A_2^k$ is not always equal to $k$ as it was said in Lemma 3 of \cite{Dm} because in case $A_{2^k}$ if $k=2$ its $Syl_2 A_4 \simeq K_4$ but $K_4$ is abelian group so its derived length is 1.

\section{Conclusion }
  The commutator width of Sylow 2-subgroups of alternating group ${A_{{2^{k}}}}$, permutation group ${S_{{2^{k}}}}$ and Sylow $p$-subgroups of $Syl_2 A_p^k$ ($Syl_2 S_p^k$) is equal to 1. Commutator width of permutational wreath product $B \wr C_n$, were $B$ is arbitrary group, was researched.

\end{document}